\newtheorem{theo}{Theorem}
\newtheorem{prop}[theo]{Proposition}
\newtheorem{coro}[theo]{Corollary}
\newtheorem{lemm}[theo]{Lemma}
\newtheorem{conj}[theo]{Conjecture}
\newtheorem{prob}[theo]{Problem}
\theoremstyle{remark}
\newtheorem{rema}[theo]{\bf Remark}
\theoremstyle{remark}
\theoremstyle{remark}
\begin{document}

\title{Realisation of groups as automorphism groups in categories}

\author{Gareth A. Jones}

\address{School of Mathematical Sciences, University of Southampton, Southampton SO17 1BJ, UK}
\email{G.A.Jones@maths.soton.ac.uk}

\iffalse
%\thanks{Partially supported by Project MTM2016-79497-P, Project 
%SEV-2015-0554,  Project Fondecyt 1150003 and Project Anillo ACT1415 PIA-CONICYT}
%05C10 Planar gphs, top aspects of gph thy
%14H57 Dessins d'enfants thy
%20B05 Perm gps, gen thy for finite gps
%20B07 Perm gps, gen thy for infinite gps
%20B25 Fin auto gps of alg, geom or comb structures
%20B27 Infinite automorphism groups
%51M20 Polyhedra & polytopes, ...
%52B11 n-dim polytopes
%52B15 Sym properties of polytopes
%57M10 Covering spaces
\fi
\subjclass[2010]{Primary 05C10, secondary 14H57, 20B25, 20B27, 52B15, 57M10}
\keywords{Permutation group, centraliser, automorphism group, map, hypermap, dessin d'enfant}
\maketitle

%%%%%%%%%%%%%%%%%
%%%%%%%%%%%%%%%%%

\begin{abstract}
It is shown that in various  categories, including many consisting of maps or hypermaps, oriented or unoriented, of a given hyperbolic type, every countable group $A$ is isomorphic to the automorphism group of uncountably many non-isomorphic objects, infinitely many of them finite if $A$ is finite. In particular, the latter applies to dessins d'enfants, regarded as finite oriented hypermaps. The proof, involving maximal subgroups of various triangle groups, yields a simple construction of a regular map whose automorphism group contains an isomorphic copy of every finite group.
\end{abstract}

%%%%%%%%%%%%%%%%%%%%%%%%%%%%
%%%%%%%%%%%%%%%%%%%%%%%%%%%%

\section{Introduction}\label{intro}

In 1939 Frucht published his celebrated theorem~\cite{Fru} that every finite group is isomorphic to the automorphism group of a finite graph; in 1960, by allowing infinite graphs, Sabidussi~\cite{Sab} extended this result to all groups. Similar results have been obtained, realising all finite groups (or in some cases all groups) as automorphism groups of various other mathematical structures. Examples include the following, in chronological order: distributive lattices, by Birkhoff~\cite{Bir46} in 1946;  regular graphs of a given degree, by Sabidussi~\cite{Sab57} in 1957; Riemann surfaces, by Greenberg~\cite{Gre} in 1960; projective planes, by Mendelsohn~\cite{Men72} in 1972; Steiner systems, by Mendelsohn~\cite{Men78} in 1978; fields, by Fried and Koll\'ar~\cite{FK} in 1978; matroids of rank 3, by Babai~\cite{Bab81} in 1981; oriented maps and hypermaps, by Cori and Mach\`\i~\cite{CM} in 1982; finite volume hyperbolic manifolds of a given dimension, by Belolipetsky and Lubotzky~\cite{BeLu} in 2005; abstract polytopes, by Schulte and Williams~\cite{SW} in 2015 and by Doignon~\cite{Doi} in 2016. Babai has given comprehensive surveys of this topic in~\cite{Bab81, Bab95}.

In many of these cases, each group is represented as the automorphism group of not just one, but infinitely many non-isomorphic objects. The aim of this paper is to obtain results of this nature for certain `permutational categories', introduced and discussed in~\cite{Jon16}. These are
categories $\mathfrak C$ with a `parent group' $\Gamma=\Gamma_{\mathfrak C}$ such that each object $\mathcal O$ in  $\mathfrak C$ can be identified with a permutation representation $\theta:\Gamma\to S:={\rm Sym}(\Omega)$ of $\Gamma$ on some set $\Omega$, and the morphisms ${\mathcal O}_1\to{\mathcal O_2}$ can be identified with the functions $\Omega_1\to\Omega_2$ which commute with the actions of $\Gamma$ on the corresponding sets $\Omega_i$. They include the categories of maps or hypermaps on surfaces, oriented or unoriented, and possibly of a given type; other examples include the category of coverings of a `suitably nice' topological space; this includes the category of dessins d'enfants, regarded as finite coverings of the thrice-punctured sphere, or equivalently as finite oriented hypermaps. 

The automorphism group ${\rm Aut}_{\mathfrak C}(\mathcal O)$ of an object $\mathcal O$ in a permutational category $\mathfrak C$ is identified with the centraliser $C:=C_S(G)$ in $S$ of the monodromy group $G:=\theta(\Gamma)$ of $\mathcal O$. Now $\mathcal O$ is connected if and only if $G$ is transitive on $\Omega$, as we will assume throughout this paper. Such objects correspond to conjugacy classes of subgroups of $\Gamma$, the point-stabilisers. An important result is the following:

\begin{theo}\label{isothm}
${\rm Aut}_{\mathfrak C}({\mathcal O})\cong N_G(H)/H\cong N_{\Gamma}(M)/M$,
where $H$ and $M$ are the stabilisers in $G$ and $\Gamma$ of some $\alpha\in\Omega$, and $N_G(H)$ and $N_{\Gamma}(M)$ are their normalisers. 
\end{theo}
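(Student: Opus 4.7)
The plan is to break the statement into two well-known isomorphisms: first $C_S(G)\cong N_G(H)/H$, which is the classical description of the centraliser of a transitive permutation group in terms of its point stabiliser, and then $N_\Gamma(M)/M\cong N_G(H)/H$ induced by the surjection $\theta:\Gamma\twoheadrightarrow G$. Since ${\rm Aut}_{\mathfrak C}({\mathcal O})$ is defined to be $C_S(G)$, these two isomorphisms combine to give the theorem.

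For the first isomorphism, I would identify $\Omega$ with the coset space $G/H$ via $g\alpha\leftrightarrow gH$, so that $G$ acts by left multiplication. For each $g\in N_G(H)$ the formula $\pi_g\colon xH\mapsto xgH$ gives a well-defined permutation of $G/H$, since $g^{-1}Hg=H$; it has inverse $\pi_{g^{-1}}$ and visibly commutes with left multiplication, so $\pi_g\in C_S(G)$. Conversely, any $\pi\in C_S(G)$ is determined by its value $\pi(H)=g_0H$, and requiring $h(g_0H)=g_0H$ for all $h\in H$ yields $g_0^{-1}Hg_0\subseteq H$; the reverse inclusion comes from applying the same argument to $\pi^{-1}$, so $g_0\in N_G(H)$. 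Thus $g\mapsto\pi_g$ is a surjective homomorphism $N_G(H)\to C_S(G)$ with kernel $\{g\in N_G(H):gH=H\}=H$.

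For the second isomorphism, the description of $M$ as a point stabiliser gives $M=\theta^{-1}(H)$, and $\theta(M)=H$. I would define $\varphi\colon N_\Gamma(M)\to N_G(H)/H$ by $\varphi(\gamma)=\theta(\gamma)H$. It lands in $N_G(H)/H$ because for $\gamma\in N_\Gamma(M)$,
\[
\theta(\gamma)\,H\,\theta(\gamma)^{-1}=\theta(\gamma M\gamma^{-1})=\theta(M)=H.
\]
It is surjective because any $g\in N_G(H)$ has a preimage $\gamma$ under $\theta$, and then $\theta(\gamma M\gamma^{-1})=gHg^{-1}=H$ forces $\gamma M\gamma^{-1}\subseteq M$, with the symmetric argument for $\gamma^{-1}$ giving $\gamma\in N_\Gamma(M)$. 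It is injective modulo $M$ because $\varphi(\gamma)=H$ means $\theta(\gamma)\in H$, hence $\gamma\in\theta^{-1}(H)=M$.

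The main obstacle—such as it is—is to handle the possibility that $\Omega$, $\Gamma$, and $G$ are infinite: the well-definedness of $\pi$ only yields the one-sided containment $g_0^{-1}Hg_0\subseteq H$, and one must invoke the inverse permutation to upgrade it to full membership in $N_G(H)$. Once that subtlety is flagged, the remainder of the argument is routine orbit-stabiliser bookkeeping.
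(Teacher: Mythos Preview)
Your proof is correct and follows essentially the same approach as the paper: the paper deduces Theorem~\ref{isothm} immediately from the folklore result $C_S(G)\cong N_G(H)/H$ (stated as Theorem~\ref{autothm} and proved elsewhere), leaving the second isomorphism $N_G(H)/H\cong N_\Gamma(M)/M$ as an implicit consequence of the correspondence theorem applied to $\theta:\Gamma\twoheadrightarrow G$ with $M=\theta^{-1}(H)$. You have simply spelled out both halves explicitly, including the care needed with one-sided containments in the infinite case.
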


There are analogous results in various contexts, ranging from abstract polytopes to covering spaces, which can be regarded as special cases of Theorem~\ref{isothm}. Proofs of this result for particular categories can be found in the literature: for instance, in~\cite{JS} it is deduced for oriented maps from a more general result about morphisms in that category; in~\cite[Theorem~2.2 and Corollary~2.1]{JW} a proof for dessins is briefly outlined; similar results for covering spaces are proved in~\cite[Appendix]{Mas} and~\cite[Theorem~81.2]{Mun}, and for abstract polytopes in~\cite[Propositions~2D8 and 2E23(a)]{MS}. Theorem~\ref{isothm} follows immediately from the following `folklore' theorem, proved in~\cite{Jon18a}:

\begin{theo}\label{autothm}
Let $G$ be a transitive permutation group on a set $\Omega$, with $H$ the stabiliser of some $\alpha\in\Omega$, and let $C:=C_S(G)$ be the centraliser of $G$ in the symmetric group $S:={\rm Sym}(\Omega)$. Then $C \cong N_G(H)/H$.
\end{theo}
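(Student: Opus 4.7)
The plan is to identify $\Omega$ with the coset space $G/H$ via the bijection $gH\mapsto g\alpha$, so that the left action of $G$ on $\Omega$ corresponds to left multiplication on cosets, and then to show that $C$ is realised by the natural right-multiplication action of $N_G(H)/H$.

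First I would construct a map $\phi\colon N_G(H)\to C$. For $n\in N_G(H)$, the rule $g\alpha\mapsto gn^{-1}\alpha$ is well defined as a function on $\Omega$, because $g\alpha=g'\alpha$ forces $g^{-1}g'\in H=nHn^{-1}$, hence $n^{-1}g^{-1}g'n\in H$, so $gn^{-1}\alpha = g'n^{-1}\alpha$. The same argument applied to $n^{-1}\in N_G(H)$ yields a two-sided inverse, so $\phi(n)\in S$, and $\phi(n)$ clearly commutes with every left multiplication by an element of $G$. A direct computation shows $\phi$ is a group homomorphism into $C$, with kernel $\{n\in N_G(H):n^{-1}\alpha=\alpha\}=H$. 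Thus $\phi$ induces an injection $\bar\phi\colon N_G(H)/H\hookrightarrow C$.

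For surjectivity, take an arbitrary $c\in C$ and set $\beta:=c(\alpha)$. Since $c$ is a bijection commuting with $G$, one has
\[
G_\beta=\{g\in G:gc(\alpha)=c(\alpha)\}=\{g\in G:c(g\alpha)=c(\alpha)\}=\{g\in G:g\alpha=\alpha\}=H.
\]
By transitivity, $\beta=n\alpha$ for some $n\in G$, and then $G_\beta=nHn^{-1}$, so $nHn^{-1}=H$, i.e.\ $n\in N_G(H)$. The coset $nH$ is determined by $\beta$. Moreover, any element $c'\in C$ satisfying $c'(\alpha)=\beta$ must agree with $c$ everywhere, because $c'(g\alpha)=gc'(\alpha)=g\beta=c(g\alpha)$ for every $g\in G$. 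Matching this $\beta$ against the formula for $\phi$ (adjusting by $n\mapsto n^{-1}$, which is an automorphism of $N_G(H)/H$), one sees that $c$ lies in the image of $\bar\phi$, completing the isomorphism.

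The main subtleties, rather than any single hard step, are bookkeeping issues: verifying well-definedness of the right-multiplication permutation (which is exactly what forces $n$ to lie in $N_G(H)$, not merely in $G$), and being careful about whether one ends up with a homomorphism or antihomomorphism; the latter matters because in general $N_G(H)/H$ need not be abelian, but it is always antiisomorphic to itself via $nH\mapsto n^{-1}H$, so either direction yields the required isomorphism. The only genuinely delicate point is the identification $G_\beta=H$ in the surjectivity step, which is what rules out the possibility of ``extra'' elements in $C$ moving $\alpha$ to a point whose stabiliser merely contains $H$.
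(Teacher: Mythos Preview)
Your argument is correct and is essentially the standard proof of this folklore result: identify $\Omega$ with $G/H$, realise $C$ as right multiplication by elements of $N_G(H)$ modulo $H$, and check injectivity via the kernel and surjectivity via the observation that any $c\in C$ preserves point-stabilisers, forcing $c(\alpha)$ to have stabiliser exactly $H$. The bookkeeping about homomorphism versus antihomomorphism is handled correctly (your $\phi$ as defined is already a genuine homomorphism, so the remark about $n\mapsto n^{-1}$ is not strictly needed, though it does no harm).

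Note, however, that the paper itself does not give a proof of this statement: it is quoted as a `folklore' theorem and the reader is referred to~\cite{Jon18a} for a proof. So there is no in-paper argument to compare against; your proof is precisely the kind of direct verification one would expect for such a result, and it would serve perfectly well in place of the citation.
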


\iffalse
It is natural to ask whether there are any restrictions on the automorphism groups of objects in a given category, apart from the obvious ones concerning their cardinality.
\fi

Of course, finite objects, in any category, have finite automorphism groups. In most of the permutational categories we are interested in, the parent groups are finitely generated, so the automorphism groups of connected objects are all countable. Let us say that a category $\mathfrak C$ has the {\em countable} (resp.~{\em finite}) {\em automorphism realisation property\/} if every countable (resp.~finite) group $A$ is isomorphic to ${\rm Aut}_{\mathfrak C}({\mathcal O})$ for some connected object (resp.~finite connected object) $\mathcal O$ in $\mathfrak C$. We will abbreviate these properties to the CARP and FARP, and will say that $\mathfrak C$ has the {\em strong automorphism realisation property} (SARP) if it has both of them. In the case of a permutational category $\mathfrak C$, these properties follow immediately from Theorem~\ref{isothm} if the associated parent group $\Gamma$ has the corresponding properties, namely that every countable group $A$ is isomorphic to $N_{\Gamma}(M)/M$ for some subgroup $M$ of $\Gamma$, and this can be chosen to have finite index in $\Gamma$ if $A$ is finite.

We will be mainly concerned with permutational categories consisting of maps and hypermaps of various types $(p,q,r)$, where $p, q, r\in{\mathbb N}\cup\{\infty\}$. For these, the parent groups are either extended triangle groups $\Delta[p,q,r]$, generated by reflections in the sides of a triangle with internal angles $\pi/p$, $\pi/q$ and $\pi/r$, or (for subcategories of oriented objects) their orientation-preserving subgroups, the triangle groups $\Delta(p,q,r)$. We say that a triple $(p,q,r)$ is {\sl spherical, euclidean\/} or {\sl hyperbolic\/} as $p^{-1}+q^{-1}+r^{-1}>1, =1$ or $<1$ respectively (where by convention we take $\infty^{-1}=0$), so that these groups act on the sphere, euclidean plane, or hyperbolic plane. We will use Theorem~\ref{isothm} to prove:

\begin{theo}\label{realisation}
{\rm(a)} If $(p,q,r)$ is a hyperbolic triple, where $p, q, r\in{\mathbb N}\cup\{\infty\}$, then the groups $\Delta(p,q,r)$ and $\Delta[p,q,r]$, together with their associated categories $\mathfrak C$ of oriented hypermaps and of all hypermaps of type $(p,q,r)$, have the finite automorphism realisation property.
\vskip2pt
{\rm(b)} If in addition at least one of $p, q$ and $r$ is $\infty$, then the above groups and categories all have the countable and hence the strong automorphism realisation property.
\vskip2pt
{\rm(c)} In (a), each finite group is realised as ${\rm Aut}_{\mathfrak C}(\mathcal O)$ by $\aleph_0$ non-isomorphic finite connected objects $\mathcal O\in\mathfrak C$, while in (b), each countable group is realised by $2^{\aleph_0}$ non-isomorphic connected objects.
\end{theo}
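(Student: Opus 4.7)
My plan is to reduce, via Theorem~\ref{isothm}, the realisation problem to the following purely group-theoretic task inside the parent triangle group $\Gamma \in \{\Delta(p,q,r),\Delta[p,q,r]\}$: construct a subgroup $M \le \Gamma$ with $N_\Gamma(M)/M \cong A$, arranging $M$ to have finite index when $A$ is finite. The device I expect to use is a \emph{maximality trick}: if $K$ is a non-normal maximal subgroup of $\Gamma$ and $M \triangleleft K$ with $K/M \cong A$, then $K \le N_\Gamma(M) \le \Gamma$, and by maximality $N_\Gamma(M)$ is either $K$ or $\Gamma$; the latter is ruled out as soon as $M \not\triangleleft \Gamma$, forcing $N_\Gamma(M) = K$ and hence the desired isomorphism.

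For part~(a) with $A$ finite, I would produce such a pair $(K,M)$ by pulling back from a finite quotient. Fix a surjection $\theta:\Gamma \to L$ onto a finite group $L$---such epimorphisms exist in abundance for hyperbolic triangle groups---and select a non-normal maximal subgroup $L_0 \le L$ together with a surjection $L_0 \twoheadrightarrow A$ whose kernel is not normal in $L$; then set $K := \theta^{-1}(L_0)$ and $M := \theta^{-1}(\ker(L_0 \to A))$. The standard subgroup correspondence transports maximality, non-normality and the quotient structure from $L$ to $\Gamma$, so the maximality trick delivers $N_\Gamma(M)/M \cong A$. To ensure that such $L_0 \subset L$ exist for an \emph{arbitrary} finite $A$, one replaces $L$ by a product or wreath-product construction such as $L \times A$ or a suitable $L \wr A$, arranged so that the enlarged group is still a quotient of $\Gamma$; the desired normaliser structure is then built in by design.

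For part~(b), the assumption that some $p, q$ or $r$ equals $\infty$ makes $\Gamma$ essentially a free product, e.g.\ $\Delta(p,q,\infty)\cong C_p \ast C_q$, and $\Gamma$ therefore contains non-abelian free subgroups of every finite rank, each surjecting onto every countable group. One may then work directly with an infinite-index non-normal maximal subgroup $K$ of $\Gamma$, playing the role formerly played by $\theta^{-1}(L_0)$, and apply the same maximality trick to realise an arbitrary countable $A$. Part~(c) is then obtained by observing that the construction has genuinely many free parameters---different choices of $\theta$, of $L_0$, of the quotient $L_0 \to A$, or of the embedding of $A$ into an auxiliary group---yielding non-conjugate subgroups $M$ and hence non-isomorphic objects; a counting of these parameters produces the stated $\aleph_0$ and $2^{\aleph_0}$ families.

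The main obstacle I foresee is the existence step underlying part~(a): for a fixed hyperbolic triple $(p,q,r)$ and an \emph{arbitrary} finite group $A$, one must exhibit a finite quotient $L$ of $\Delta(p,q,r)$ possessing a non-normal maximal subgroup that surjects onto $A$ with non-normal kernel. Since not every finite group is itself a quotient of $\Delta(p,q,r)$---the orders of the canonical generators impose nontrivial constraints---$A$ must be embedded inside a larger, carefully engineered quotient, and the various non-normality conditions verified after this embedding. Once this combinatorial-group-theoretic step is in place, everything else reduces to the bookkeeping of transferring data through Theorem~\ref{isothm}, with the extended case $\Delta[p,q,r]$ handled by the fact that $\Delta(p,q,r)$ sits inside it as a subgroup of index two.
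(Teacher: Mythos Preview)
Your overall framework---the ``maximality trick'' forcing $N_\Gamma(M)=K$ whenever $K$ is non-normal maximal and $M\triangleleft K$ is not normal in $\Gamma$---is exactly what the paper uses (its Propositions~\ref{FARPprop} and~\ref{CARP}). The genuine gap lies in how you propose to manufacture the pair $(K,M)$.

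For part~(a), you want to work entirely inside a finite quotient $L$ of $\Gamma$: find a maximal $L_0\le L$ with $L_0\twoheadrightarrow A$ and pull back. But the suggestions of passing to $L\times A$ or $L\wr A$ do not obviously yield quotients of the fixed triangle group $\Delta(p,q,r)$, and even when they do, a subgroup of the form $L_0\times A$ is typically not maximal. You yourself flag this as the main obstacle, and it is not resolved. The paper sidesteps this entirely: it does \emph{not} try to realise $A$ inside a finite quotient. Instead it fixes primitive actions $\Gamma\twoheadrightarrow{\rm PSL}_2(n)$ on $\mathbb{P}^1(\mathbb{F}_n)$ (for primes $n\equiv -1\bmod{\rm lcm}(2p,2q,2r)$), takes $N_n$ to be the point stabiliser in $\Gamma$, and uses Singerman's theorem and Riemann--Hurwitz to identify $N_n$ as a surface group of genus $g\to\infty$, which surjects onto $F_g$ and hence onto any finite $A$. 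The kernel $M$ of $N_n\twoheadrightarrow A$ does \emph{not} contain $\ker(\Gamma\to{\rm PSL}_2(n))$; non-normality of $M$ in $\Gamma$ is secured by the index comparison $|N_n:M|=|A|<|N_n:K_n|=n(n-1)/2$. Thus the paper exploits the Fuchsian structure of $N_n$ rather than engineering the finite quotient.

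For part~(b), the difficulty you skate over is the heart of the matter. Yes, $\Delta(p,q,\infty)\cong C_p*C_q$ contains many free subgroups, but an arbitrary free subgroup is not maximal, and maximal subgroups of infinite index are not easy to exhibit. The paper devotes most of its effort here to constructing, by hand, explicit infinite planar maps (the ``flowers'' $\mathcal N_3$, $\mathcal N_p$, $\mathcal N_{p,q}$) whose monodromy is primitive; the point stabiliser $N$ is then maximal of infinite index, and Reidemeister--Schreier shows $N\twoheadrightarrow F_\infty$. Finally, the passage to $\Delta[p,q,r]$ is not a formal index-two argument: for~(a) the paper analyses the NEC-group structure of the stabilisers $N_n^*$ via Hoare's extension of Singerman's theorem, and for~(b) it uses chirality of the constructed maps to show $N_\Gamma(M)=N_{\Gamma^+}(M)$.
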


The cardinalities in Theorem~\ref{realisation}(c) are the best possible, since they are the numbers of finite and of all connected objects, up to isomorphism, in the relevant categories. By contrast with Theorem~\ref{realisation}, if we take $\Gamma$ to be a Tarski monster~\cite{Ols80} then $N_{\Gamma}(M)=M$ for every subgroup $M\ne 1$, so the only groups realised as automorphism groups in the corresponding category are $1$ and $\Gamma$.

The spherical and euclidean triples must be excluded since the corresponding triangle groups are either finite or solvable, so the same restriction applies to the automorphism groups of objects in the associated categories. By taking $p=r=\infty$ and $q=2$ or $\infty$ we see that the categories $\mathfrak M$ and $\mathfrak H$ of all maps and hypermaps, together with their subcategories $\mathfrak M^+$ and $\mathfrak H^+$ of oriented maps and hypermaps, satisfy Theorem~\ref{realisation}. In the case of $\mathfrak M^+$ and $\mathfrak H^+$, Cori and Mach\`\i\/~\cite{CoMa} showed in 1982 that every finite group arises as an automorphism group; they considered only finite groups, but their proof extends to countable groups. In fact, by Theorem~\ref{realisation}(a) the category of Grothendieck's dessins d'enfants~\cite{Gro} of any given hyperbolic type has the FARP. Of course dessins, regarded as finite oriented hypermaps, have finite automorphism groups, so they do not satisfy the CARP. In \S\ref{cocompact} we will prove a result, based on early work of Conder~\cite{Con80, Con81}, to support the following conjecture:

\begin{conj}\label{SARPconj}
The condition that $p, q$ or $r=\infty$ can be omitted from Theorem~\ref{realisation}(b), so that the triangle groups $\Delta(p,q,r)$ and $\Delta[p,q,r]$ of {\sl any\/} hyperbolic type, and their associated categories, satisfy the  strong automorphism realisation property.
\end{conj}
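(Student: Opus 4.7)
By Theorem~\ref{isothm} it suffices, given a hyperbolic triple $(p,q,r)$ with $p,q,r\in\mathbb{N}$ and a countable group $A$, to produce a subgroup $M$ of $\Gamma:=\Delta(p,q,r)$ (and separately of $\Delta[p,q,r]$) with $N_{\Gamma}(M)/M\cong A$. Part~(a) of Theorem~\ref{realisation} already supplies such an $M$ of finite index whenever $A$ is finite, so the remaining task is to realise each countably infinite $A$ in this way. Equivalently, I seek a transitive permutation representation $\theta\colon\Gamma\to{\rm Sym}(\Omega)$ on a countable set $\Omega$ whose centraliser in ${\rm Sym}(\Omega)$ is isomorphic to $A$; any point stabiliser then serves as $M$.

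The plan combines two ideas. First, ignoring the third triangle relation, one realises $A$ as the centraliser of a representation of the free product $F:=C_p*C_q=\langle x,y\mid x^p=y^q=1\rangle$. Let $\Omega:=A\times X$ for a countable set $X$, with $A$ acting freely by left translation on the first coordinate, and choose permutations $\theta(x),\theta(y)$ of orders $p,q$ which commute with this $A$-action and act transitively on $\Omega$. The centraliser is then exactly $A$ provided the permutations restricted to $\{1\}\times X$ contain enough rigid gadgets to kill any extra symmetry: this is the classical construction of Frucht, Sabidussi and Cori--Mach\`\i, and imposes no genuine obstruction when $(p,q)\ne(2,2)$.

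Second, to lift this to a representation of $\Gamma$ one must arrange that $\theta(xy)^r=1$. Here I invoke Conder's composition technique for coset diagrams~\cite{Con80, Con81}: given a coset diagram of $F$ and a finite coset diagram of $\Gamma$ of type $(p,q,r)$ equipped with suitable \emph{handles}, one may glue the latter into the former, replacing two cycles of $\theta(xy)$ by a single cycle whose length divides $r$. I perform these surgeries $A$-equivariantly, carrying out the same operation simultaneously on every $A$-translate of a handle, which preserves the commuting $A$-action throughout. Exhausting $\Omega$ by an ascending chain of $A$-invariant finite subsets and correcting the defective cycles one orbit at a time yields in the limit a transitive representation $\theta$ of $\Gamma$ on $\Omega$ whose image is centralised by $A$; the extended group $\Delta[p,q,r]$ is then treated by a small additional step, extending $\theta$ across a reflection in such a way that the new involution also commutes with $A$.

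The principal obstacle is rigidity: to guarantee that the iterative gluing introduces no new centralising permutation, the Conder handles used must themselves be rigid, in the sense of having trivial internal automorphism group and of being pairwise distinguishable within $\Omega$. For the triple $(2,3,7)$, and certain neighbours, such rigid handles were constructed explicitly by Conder, which is what makes the conjecture supportable on the basis of his work and is precisely what the cocompact result of \S\ref{cocompact} will exploit. For an arbitrary hyperbolic triple one needs an analogous supply of rigid composable diagrams of type $(p,q,r)$, and one must verify that the limit of the infinite composition remains both transitive and faithfully centralised by $A$ — in particular, that no accumulation of near-symmetries produces a centralising permutation that is not already in $A$. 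Establishing this uniformly in $(p,q,r)$ is the essential technical step that separates a theorem from the conjecture stated above.
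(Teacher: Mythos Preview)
This statement is a conjecture in the paper, not a theorem: the paper does not prove it, offering only Proposition~\ref{237prop} (triples with one entry even, another divisible by $3$, the third at least $7$) as partial evidence. Your proposal is likewise explicitly incomplete, as you concede in your final paragraph, so there is no full argument on either side to compare.

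The two strategies are nonetheless quite different. The paper follows the template of Proposition~\ref{CARP}: it builds a single infinite primitive representation of $\Gamma$ by chaining together Conder's diagrams $G$ and $H$ (or $S(h,d)$, $U(h,d)$ for general $r\ge 7$) via $(1)$-joins and adding bridges between spare handles; since each building block already satisfies $(xy)^r=1$ and the joins preserve this, the relation holds automatically in the limit. Primitivity is checked via a ``useful cycle'' of prime length $17$ in $H$, the resulting maximal point-stabiliser $N$ is shown to map onto $F_\infty$, and every countable $A$ then arises as some quotient $N/M$. The group $A$ never enters the construction of the representation itself. You instead start with a $C_p*C_q$-representation on $\Omega=A\times X$ carrying a free $A$-action, where $(xy)^r=1$ fails, and propose $A$-equivariant surgery to \emph{impose} the missing relation.

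This is the weak point beyond those you already flag. Conder's joins are designed to merge two diagrams already of type $(p,q,r)$ into a larger one of the same type; they are not a mechanism for converting an arbitrary $C_p*C_q$-diagram into a $\Delta(p,q,r)$-diagram. Each local surgery alters nearby cycles of $xy$ and may well produce new cycles of length not dividing $r$, so your exhaustion need not converge to anything on which $(xy)^r$ acts trivially---you supply no argument that the corrections outpace the defects they create. You rightly note the transitivity and centraliser questions in the limit, but even the existence of a limiting $\Delta(p,q,r)$-action is unsecured. The paper's indirect route via Proposition~\ref{CARP} avoids all of this by never leaving the category of $(p,q,r)$-diagrams, at the cost of requiring a primitivity argument that is currently available only for the restricted triples of Proposition~\ref{237prop}.
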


\iffalse
Theorem~\ref{realisation} is an analogue of Frucht's theorem for finite graphs and groups~\cite{Fru} and its extension to the infinite case by Sabidussi~\cite{Sab}, and of a theorem of Greenberg for Riemann surfaces~\cite{Gre}, where the proofs are completely different.
\fi

In fact Theorem~\ref{realisation}(a) can be deduced from a more general and very deep result of Belolipetsky and Lubotzky~\cite[Theorem~2.1]{BeLu}, which implies the FARP for every finitely generated group which has a subgroup of finite index with an epimorphism onto a non-abelian free group. This applies to every non-elementary finitely generated Fuchsian group, and in particular to every hyperbolic triangle group. However, the proof of~\cite[Theorem~2.1]{BeLu} is long, delicate and non-constructive, so here we offer a shorter, more direct argument, specific to the context of this paper.

The proof of Theorem~\ref{realisation} is divided into several cases, depending on the particular triangle group $\Gamma=\Gamma_{\mathfrak C}$ involved. In each case we construct a primitive permutation representation of $\Gamma$, of infinite or unbounded finite degree, such that a point stabiliser $N$ has an epimorphism onto a free group of countably infinite or unbounded finite rank, and hence onto an arbitrary countable or finite group $A$. By arranging that the kernel $M$ is not normal in $\Gamma$ we see from the maximality of $N$ in $\Gamma$ that $N_{\Gamma}(M)/M=N/M\cong A$, so that Theorem~\ref{isothm} gives ${\rm Aut}_{\mathfrak C}({\mathcal O})\cong A$ for the object $\mathcal O\in\mathfrak C$ corresponding to $M$. For part (c) we show that there are $\aleph_0$ or $2^{\aleph_0}$ mutually non-conjugate such subgroups $M\le\Gamma$.

\iffalse
For each group $A$ we can find infinitely many conjugacy classes in $\Gamma$ of such subgroups $M$, giving infinitely many non-isomorphic objects $\mathcal O$ realising $A$. They are all regular covers, with covering group $A$, of the object $\mathcal N\in\mathfrak C$ corresponding to $N$.
\fi

As a by-product of the proof of Theorem~\ref{realisation}(b), we will construct in \S\ref{embedding} a simple example of a regular oriented map such that every finite group is embedded in its automorphism group.

Although there is a very wide variety of finite groups, there are also many phenomena which are displayed by countably infinite groups, but not by finite groups, and which can therefore, by Theorem~\ref{realisation}(b), arise for the automorphism groups of infinite maps and hypermaps, but not of their finite analogues. Examples (all finitely generated and hence countable) include:
\begin{itemize}
\item finitely presented groups with unsolvable word and conjugacy problems (see~\cite{Nov55});
\item Hopfian groups, those  isomorphic to proper quotients of themselves (see~\cite{BS});
\item pairs of non-isomorphic groups, each an epimorphic image of the other (see~\cite{BS});
\item co-Hopfian groups, isomorphic to proper subgroups of themselves ($C_{\infty}$, for example);
\item groups $G\cong G^n\not\cong G^r$ for all $r=2,\ldots, n-1$, where $n>1$ (see~\cite{Tjon80}).
%\item Tarski monsters, infinite groups in which every non-identity proper subgroup has order $p$ for some fixed prime $p$ (see~\cite{Ols80}).
\end{itemize}
These observations generalise some of those made in~\cite{Jon08} for automorphism groups of regular oriented hypermaps.  (A connected object ${\mathcal O}\in{\mathfrak C}$ is called {\sl regular\/} if ${\rm Aut}_{\mathfrak C}({\mathcal O})$ acts transitively on $\Omega$, or equivalently $M$ is a normal subgroup of $\Gamma$, in which case $C\cong G\cong \Gamma/M$.)

One should not confuse the CARP with the SQ-universality of a group $\Gamma$, a concept of largeness introduced by P.~M.~Neumann in~\cite{Neu}: this requires that every countable group is isomorphic to a subgroup of a quotient of $\Gamma$, that is, to $N/M$ where $M\le N\le \Gamma$ and $M$ is normal in $\Gamma$, so that $N_{\Gamma}(M)=\Gamma$, while the CARP differs in requiring that $N_{\Gamma}(M)=N$. In terms of permutational categories, the SQ-universality of the parent group $\Gamma$ means that every countable group is embedded in the automorphism group of some regular object, whereas the CARP means that it is isomorphic to the automorphism group of some object, not necessarily regular.

\medskip

\noindent{\bf Acknowledgments.} The author is grateful to Ernesto Girondo, Gabino Gonz\'alez-Diez and Rub\'en Hidalgo for discussions about dessins d'enfants which motivated this work, and also to Ashot Minasyan, Egon Schulte, David Singerman and Alexander Zvonkin for some very helpful comments.

%%%%%%%%%%%%%%%%%%%%%%%%

\section{Permutational categories}\label{permcats}

Following~\cite{Jon16}, let us define a {\em permutational category\/} $\mathfrak C$ to be a category which is naturally isomorphic to the category of permutation representations $\theta:\Gamma\to S:={\rm Sym}(\Omega)$ of a {\em parent group\/} $\Gamma=\Gamma_{\mathfrak C}$. We then define the {\em automorphism group\/} ${\rm Aut}({\mathcal O})={\rm Aut}_{\mathfrak C}({\mathcal O})$ of an object $\mathcal O$ in $\mathfrak C$ to be the group of all permutations of $\Omega$ commuting with the action of $\Gamma$ on $\Omega$; thus it is  the centraliser $C_S(G)$ of the {\em monodromy group\/} $G=\theta(\Gamma)$ of $\mathcal O$ in the symmetric group $S$. In this paper we will restrict our attention to the {\em connected\/} objects $\mathcal O$ in $\mathfrak C$, those corresponding to transitive representations of $\Gamma$. We will pay particular attention to those categories for which the parent group $\Gamma$ is either an extended triangle group
\[\Delta[p,q,r]=\langle R_0, R_1, R_2\mid R_i^2=(R_1R_2)^p=(R_2R_0)^q=(R_0R_1)^r=1\rangle,\]
or its orientation-preserving subgroup of index $2$, the triangle group
\[\Delta(p,q,r)=\langle X, Y, Z\mid X^p=Y^q=Z^r=XYZ=1\rangle,\]
where $X=R_1R_2$, $Y=R_2R_0$ and $Z=R_0R_1$. Here $p, q, r\in{\mathbb N}\cup\{\infty\}$, and we ignore any relations of the form $W^{\infty}=1$. We will now give some important examples of such categories; for more details, see~\cite{Jon16}. In what follows, $C_n$ denotes a cyclic group of order $n$, $F_n$ denotes a free group of rank $n$, $V_4$ denotes a Klein four-group $C_2\times C_2$ and $*$ denotes a free product.

\smallskip

\noindent{\bf 1.} The category $\mathfrak M$ of all maps on surfaces (possibly non-orientable or with boundary) has parent group
\[\Gamma=\Gamma_{\mathfrak M}=\Delta[\infty, 2, \infty]\cong V_4*C_2.\]
This group acts on the set $\Omega$ of incident vertex-edge-face flags of a map (equivalently, the faces of its barycentric subdivision), with each generator $R_i\;(i=0, 1, 2)$ changing the $i$-dimensional component of each flag (whenever possible) while preserving the other two.

\smallskip

\noindent{\bf 2.} The subcategory $\mathfrak M^+$ of $\mathfrak M$ consists of the oriented maps, those in which the underlying surface is oriented and without boundary. This category has parent group
\[\Gamma=\Gamma_{\mathfrak M^+}=\Delta(\infty, 2, \infty)\cong C_{\infty}*C_2,\]
the orientation-preserving subgroup of index $2$ in $\Delta[\infty,2,\infty]$. This group acts on the directed edges of an oriented map: $X$ uses the local orientation to rotate them about their target vertices, and the involution $Y$ reverses their direction, so that $Z$ rotates them around incident faces. Here, and in the preceding example, $\Delta(p,2,r)$ and $\Delta[p,2,r]$ are the parent groups for the subcategories of maps of type $\{r,p\}$ in the notation of~\cite{CM}, meaning that the valencies of all vertices and faces divide $p$ and $r$ respectively, so that $X^p=Z^r=1$. (By convention, all positive integers divide $\infty$.)

\smallskip

\noindent{\bf 3.} Hypermaps are natural generalisation of maps, without the restriction that each edge is incident with at most two vertices and faces which implies that $Y^2=1$. There are several ways of defining or representing hypermaps. The most convenient way is via the Walsh bipartite map~\cite{Wal}, where the black and white vertices correspond to the hypervertices and hyperedges of the hypermap, the edges correspond to incidences between them, and the faces correspond to its hyperfaces. The category $\mathfrak H$ of all hypermaps (possibly unoriented and with boundary) has parent group
\[\Gamma=\Gamma_{\mathfrak H}=\Delta[\infty, \infty, \infty]\cong C_2*C_2*C_2.\]
This group acts on the incident edge-face pairs of the bipartite map, with $R_0$ and $R_1$ preserving the face and the incident white and black vertex respectively, while $R_2$ preserves the edge. As in the case of maps, $\Delta[p,q,r]$ is the parent group for the subcategory of hypermaps of type $(p,q,r)$.

\smallskip

\noindent{\bf 4.}  For the subcategory $\mathfrak H^+$ of oriented hypermaps, where the underlying surface is oriented and without boundary, the parent group is the even subgroup
\[\Gamma=\Gamma_{\mathfrak H^+}=\Delta(\infty, \infty, \infty)\cong C_{\infty}*C_{\infty}\cong F_2\]
of index $2$ in $\Delta[\infty,2,\infty]$. This acts on the edges of the bipartite map, with $X$ and $Y$ using the local orientation to rotate them around their incident black and white vertices, so that $Z$ rotates them around incident faces. Again $\Delta(p,q,r)$ is the parent group for the subcategory of oriented hypermaps of type $(p,q,r)$. Hypermaps of type $(p,2,r)$ can be regarded as maps of type $\{r,p\}$ by deleting their white vertices; conversely maps correspond to hypermaps with $q=2$ (see Figure~\ref{Minfty}).
 
\smallskip

\noindent{\bf 5.} One can regard the category $\mathfrak D$ of dessins d'enfants, introduced by Grothendieck~\cite{Gro}, as the subcategory of $\mathfrak H^+$ consisting of its finite objects, where the bipartite graph is finite and the surface is compact. The parent group $\Gamma=\Delta(\infty, \infty, \infty)\cong F_2$ and its action are the same as for $\mathfrak H^+$.

\smallskip

Here we briefly mention two other classes of permutational categories where Theorem~\ref{isothm} applies.

\smallskip

\noindent{\bf 6.}  Abstract polytopes~\cite{MS} are higher-dimensional generalisations of maps. Those $n$-polytopes associated with the Schl\"afli symbol $\{p_1, \ldots, p_{n-1}\}$ can be regarded as transitive permutation representations of the string Coxeter group $\Gamma$ with presentation
\[\langle R_0, \ldots, R_n\mid R_i^2=(R_{i-1}R_i)^{p_i}=(R_iR_j)^2=1\;(|i-j|>1)\rangle,\]
acting on flags. For instance maps, in Example~1, correspond to the symbol $\{\infty,\infty\}$. However, in higher dimensions, not all transitive representations of $\Gamma$ correspond to abstract polytopes, since they need to satisfy the intersection property~\cite[Proposition~2B10]{MS}.

\smallskip

\noindent{\bf 7.}  Under suitable connectedness conditions (see~\cite{Mas, Mun} for example), the connected, unbranched coverings $Y\to X$ of a topological space $X$ can be identified with the transitive permutation representations $\theta:\Gamma\to S={\rm Sym}(\Omega)$ of its fundamental group $\Gamma=\pi_1X$, acting by unique path-lifting on the fibre $\Omega$ over a base-point in $X$. The automorphism group of an object $Y\to X$ in this category is its group of covering transformations, the centraliser in $S$ of the monodromy group $\theta(\Gamma)$ of the covering. For instance, dessins (see Example~5 above) correspond to finite unbranched coverings of the thrice-punctured sphere $X={\mathbb P}^1({\mathbb C})\setminus\{0, 1, \infty\}={\mathbb C}\setminus\{0,1\}$, and hence to transitive finite permutation representations of its fundamental group $\Gamma=\pi_1X\cong F_2\cong\Delta(\infty,\infty,\infty)$. If we compactify surfaces by filling in punctures, then the unit interval $[0,1]\subset {\mathbb P}^1({\mathbb C)}$ lifts to a bipartite map on the covering surface $Y$, with black and white vertices over $0$ and $1$, and face-centres over $\infty$. See~\cite{GG, JW, LZ} for further details of these and other properties of dessins.

%A map can be regarded as a hypermap $\mathcal H$ in which each vertex of one colour, say black, has valency dividing $2$. Such vertices can therefore be omitted, leaving an uncoloured map $\mathcal M$: omitting a vertex of valency $2$ yields an edge of $\mathcal M$, supporting two directed edges corresponding to the edges of $\mathcal H$ forming a $2$-cycle of $y$, while omitting a vertex of valency $1$ yields a half-edge of $\mathcal M$ corresponding to an edge of $\mathcal H$ fixed by $y$. Since $y^2=1$ in all cases we therefore impose the extra relation $Y^2=1$, or equivalently $(R_0R_2)^2=1$ in the unoriented case, and regard the new group $\Gamma$

%%%%%%%%%%%%%%%%%

\section{Preliminary results}\label{prelim}

In this section we will prove some general results which ensure that certain groups have various automorphism realisation properties.

\begin{lemm}\label{epi}
Let $\theta:\Gamma\to\Gamma'$ be an epimorphism of groups. If $\Gamma'$ has the finite, countable or strong automorphism realisation property, then so does $\Gamma$.
\end{lemm}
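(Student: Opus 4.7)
The plan is to lift the realisation from $\Gamma'$ to $\Gamma$ by pulling back the relevant subgroup through $\theta$. Suppose $A$ is a group (finite, or countable) which is realised in $\Gamma'$, so that there exists $M' \le \Gamma'$ with $N_{\Gamma'}(M')/M' \cong A$, and (in the FARP case) $[\Gamma':M']<\infty$. Set $M := \theta^{-1}(M')\le \Gamma$. Since $\theta$ is surjective and $\ker\theta\le M$, the subgroup correspondence theorem gives a bijection between subgroups of $\Gamma$ containing $\ker\theta$ and subgroups of $\Gamma'$, respecting containment, indices and normality.

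The key step is to identify the normaliser. I would verify directly that $N_\Gamma(M)=\theta^{-1}(N_{\Gamma'}(M'))$: for $g\in\Gamma$, using surjectivity of $\theta$ together with $M=\theta^{-1}(M')$, one has $gMg^{-1}=M$ iff $\theta(g)M'\theta(g)^{-1}=M'$. With this in hand, the third isomorphism theorem yields
\[
N_\Gamma(M)/M \;\cong\; \bigl(N_\Gamma(M)/\ker\theta\bigr)\big/\bigl(M/\ker\theta\bigr) \;\cong\; N_{\Gamma'}(M')/M' \;\cong\; A,
\]
so $M$ realises $A$ in $\Gamma$.

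For the finite automorphism realisation property I would additionally observe that $[\Gamma:M]=[\Gamma':M']$, since $\theta$ induces a $\Gamma$-equivariant bijection between the coset spaces; hence if $M'$ has finite index in $\Gamma'$ then so does $M$ in $\Gamma$. Combining the two cases gives the SARP assertion. I do not expect any serious obstacle here: the whole argument is a routine application of the correspondence theorem, and the only point that deserves explicit care is the verification that the normaliser of $M$ in $\Gamma$ is precisely the preimage of the normaliser of $M'$ in $\Gamma'$, which relies essentially on $\theta$ being surjective (so that every conjugating element in $\Gamma'$ has a lift).
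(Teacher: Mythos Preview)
Your proof is correct and follows exactly the same route as the paper: pull back $M'$ to $M=\theta^{-1}(M')$, identify $N_{\Gamma}(M)=\theta^{-1}(N_{\Gamma'}(M'))$, and observe that $|\Gamma:M|=|\Gamma':M'|$. You have simply spelled out in more detail what the paper compresses into a single sentence.
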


\begin{proof}
If $A\cong N'/M'$ where $M'\le N'\le \Gamma'$ and $N'=N_{\Gamma'}(M')$, then $A\cong N/M$ where $M=\theta^{-1}(M')$ and $N=\theta^{-1}(N')=N_{\Gamma}(M)$, with $|\Gamma:M|=|\Gamma':M'|$, so $\Gamma$ inherits the properties from $\Gamma'$.
\end{proof}

Note that non-conjugate subgroups $M'$ lift to non-conjugate subgroups $M$, so $A$ is realised by least as many conjugacy classes of subgroups in $\Gamma$ as in $\Gamma'$. Our basic tool for proving that a group $\Gamma$ has the FARP will be the following:

\begin{prop}\label{FARPprop}
Let $\Gamma$ be a group with a sequence $\{N_n\mid n\ge n_0\}$ of maximal subgroups $N_n$ of finite index such that for each $a, d\in{\mathbb N}$ there is some $n$ with $|N_n:K_n|>a$, where $K_n$ is the core of $N_n$ in $\Gamma$, and there is an epimorphism $N_n\to F_d$. Then $\Gamma$ has the finite automorphism realisation property. 
\end{prop}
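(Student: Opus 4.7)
The plan is to exploit the maximality of each $N_n$ together with the freeness of the target of the epimorphism $N_n\to F_d$, picking $n$ large enough that the kernel cannot be normal in $\Gamma$.

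Given a finite group $A$, let $d$ be the minimum number of generators of $A$ (so $A$ is an epimorphic image of $F_d$), and set $a=|A|$. Apply the hypothesis to produce an index $n\ge n_0$ with $|N_n:K_n|>a$ and an epimorphism $\varphi:N_n\twoheadrightarrow F_d$. Composing $\varphi$ with a chosen epimorphism $F_d\twoheadrightarrow A$ yields an epimorphism $\psi:N_n\twoheadrightarrow A$; put $M:=\ker\psi$. Then $M\trianglelefteq N_n$, $N_n/M\cong A$, and $M$ has finite index in $\Gamma$ because $N_n$ does and $|N_n:M|=|A|$ is finite.

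It remains to identify $N_\Gamma(M)$. Since $M\trianglelefteq N_n$, we have the chain $N_n\le N_\Gamma(M)\le \Gamma$, and maximality of $N_n$ in $\Gamma$ forces $N_\Gamma(M)$ to equal either $N_n$ or $\Gamma$. Suppose, for contradiction, $N_\Gamma(M)=\Gamma$, i.e.\ $M\trianglelefteq\Gamma$. Then $M$ is a normal subgroup of $\Gamma$ contained in $N_n$, so by definition of the core $M\le K_n$. But this would give
\[
a<|N_n:K_n|\le|N_n:M|=|A|=a,
\]
a contradiction. Hence $N_\Gamma(M)=N_n$, and therefore $N_\Gamma(M)/M=N_n/M\cong A$, with $M$ of finite index in $\Gamma$. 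This is exactly the FARP for $\Gamma$.

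The only real content of the argument is the choice of $a=|A|$ together with the maximality of $N_n$: maximality restricts $N_\Gamma(M)$ to two possibilities, and the condition $|N_n:K_n|>a$ kills the unwanted one. The role of $F_d$ (versus an arbitrary group admitting $A$ as a quotient) is simply to guarantee, uniformly in $A$, that $N_n$ admits the required epimorphism onto $A$ once $d$ is at least the generator-number of $A$, which is the mild point to flag. I anticipate no further obstacle; the lemma is essentially a packaging of the maximal-subgroup/core dichotomy.
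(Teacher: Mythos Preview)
Your proof is correct and follows essentially the same route as the paper: choose $d$ so that $A$ is $d$-generated, pick $n$ with $|N_n:K_n|>|A|$ and $N_n\twoheadrightarrow F_d$, let $M$ be the kernel of the composite $N_n\twoheadrightarrow A$, and use maximality of $N_n$ together with the core bound to rule out $N_{\Gamma}(M)=\Gamma$. The only cosmetic difference is that you explicitly set $a=|A|$ and take $d$ minimal, whereas the paper just says ``some $d$''; the argument is otherwise identical.
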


\begin{proof}
Any finite group $A$ is an $d$-generator group for some $d\in\mathbb N$, so there is an epimorphism $F_d\to A$. By hypothesis, for some maximal subgroup $N=N_n$ of $\Gamma$ there is an epimorphism $N\to F_d$, and the core $K$ of $N$ satisfies $|N:K|>|A|$. Composition gives an epimorphism $N\to A$, and hence a normal subgroup $M$ of $N$ with $N/M\cong A$. Then $N_{\Gamma}(M)\ge N$, so the maximality of $N$ implies that either $N=N_{\Gamma}(M)$ or $M$ is a normal subgroup of $\Gamma$. If $M$ is normal in $\Gamma$ then $M$ must be contained in the core $K$ of $N$, so that $|N:M|\ge |N:K|$. But this is impossible, since $|N:M|=|A|$ and we chose $N=N_n$ so that $|N:K|>|A|$. Hence  $N=N_{\Gamma}(M)$, as required.
\end{proof}

\begin{rema}\label{FARPremark}
We can find such subgroups $N$ with $|N:K|$ arbitrarily large, so infinitely many of them are mutually non-conjugate, and hence so are their corresponding subgroups $M$, since conjugate subgroups have conjugate normalisers. Thus $A$ is realised by $\aleph_0$ non-isomorphic objects $\mathcal O\in\mathfrak C$.
\end{rema}

In order to deal with the CARP we need an analogue of Proposition~\ref{FARPprop} for countably infinite groups $A$. Here we have the advantage that, instead of an infinite sequence of maximal subgroups, which are finitely generated if $\Gamma$ is, a single infinitely generated maximal subgroup is sufficient. However, when $A$ is infinite we cannot ensure that $M$ is not normal in $\Gamma$ simply by comparing indices of subgroups, since these are no longer finite; a different idea is therefore needed.

\begin{prop}\label{CARP}
Let $\Gamma$ be a group with a non-normal maximal subgroup $N$ and an epimorphism $\phi:N\to F_{\infty}$. Then $\Gamma$ has the countable automorphism realisation property.
\end{prop}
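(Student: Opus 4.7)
The plan is to mimic the maximality argument of Proposition~\ref{FARPprop}, replacing the finite index comparison (no longer available when $A$ is infinite) by a direct killing argument that exploits the freedom in choosing an epimorphism $F_\infty \twoheadrightarrow A$. Given a countable group $A$, which is a quotient of $F_\infty$, I would fix an epimorphism $\pi: F_\infty \twoheadrightarrow A$ and set $M := \phi^{-1}(\ker\pi)$, so that $M \trianglelefteq N$ and $N/M \cong A$. From $N \le N_\Gamma(M)$ and the maximality of $N$, either $N_\Gamma(M) = N$ (which is what we want) or $M$ is normal in $\Gamma$. The latter is equivalent to $M \le K$, where $K := \mathrm{core}_\Gamma(N) = \bigcap_{g \in \Gamma} gNg^{-1}$; and since $N$ is not normal in $\Gamma$, we have $K < N$ strictly.

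The heart of the proof is to choose $\pi$ so that $M \not\le K$. I would argue by contradiction: suppose $\phi^{-1}(\ker\pi) \le K$ for every epimorphism $\pi: F_\infty \twoheadrightarrow A$, equivalently $\phi(n) \notin \ker\pi$ for every $n \in N \setminus K$ and every such $\pi$. Pick any $n_0 \in N \setminus K$ (possible since $K < N$) and set $w := \phi(n_0) \in F_\infty$. The key lemma I would then establish is that, provided $A \ne 1$, every non-identity element of $F_\infty$ lies in the kernel of some epimorphism $F_\infty \twoheadrightarrow A$: if $\{x_1, x_2, \ldots\}$ is a free basis of $F_\infty$ and $w$ is a word only in $x_1, \ldots, x_j$, then the homomorphism sending $x_1, \ldots, x_j \mapsto 1$ and sending the remaining $x_i$ surjectively onto a countable generating set of $A$ kills $w$ and is onto $A$.

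Applying this lemma to our $w$ forces $w = 1$, so $\phi(n_0) = 1$ and $n_0 \in \ker\phi$; since $n_0 \in N \setminus K$ was arbitrary, $N \setminus K \subseteq \ker\phi$ and hence $N = K \cup \ker\phi$. As a group is never the union of two proper subgroups, either $N = K$ (contradicting $K < N$) or $N = \ker\phi$ (contradicting surjectivity of $\phi$ onto $F_\infty \ne 1$). This contradiction yields a $\pi$ with $M \not\le K$, so $M$ is not normal in $\Gamma$, and the maximality of $N$ gives $N_\Gamma(M) = N$, whence $N_\Gamma(M)/M = N/M \cong A$, as required. The trivial case $A = 1$ is immediate via $M = N$, using $N_\Gamma(N) = N$.

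The main obstacle is exactly the one the author flags just before the statement: once the indices $|\Gamma : N|$ and $|N : M|$ can both be infinite, there is no numerical invariant that separates ``$M \le K$'' from ``$M \not\le K$''. The key insight is to trade the single rigid subgroup $M$ of the finite case for a whole family of candidates $\{\phi^{-1}(\ker\pi)\}_{\pi}$ parametrised by the many epimorphisms $\pi: F_\infty \twoheadrightarrow A$, so that the elementary word-killing property of $F_\infty$ is enough to force $M \not\le K$ for some well-chosen $\pi$.
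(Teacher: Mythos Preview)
Your argument is correct and follows essentially the same route as the paper's: use maximality of $N$ to reduce to showing $M \not\le K$, then exploit the freedom in the epimorphism $F_\infty \to A$ to place a chosen element of $N \setminus K$ into $M$. Two small points: ``equivalent to $M \le K$'' should read ``implies $M \le K$'' (only that direction holds, and it is all you need), and the paper's version is more direct --- it simply picks $g \in N \setminus K$, kills the finitely many free generators appearing in $\phi(g)$, and maps the remaining generators onto $A$, so that $g \in M \setminus K$ at once, avoiding your contradiction wrapper and the union-of-two-subgroups step.
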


\begin{proof}
Given any countable group $A$ there exist epimorphisms $\alpha:F_{\infty}\to A$; composing any of these with the epimorphism $\phi:N\to F_{\infty}$ gives an epimorphism $\phi\circ\alpha:N\to A$, and hence a normal subgroup $M=\ker(\phi\circ\alpha)$ of $N$ with $N/M\cong A$. As before, the maximality of $N$ implies that either $N=N_{\Gamma}(M)$, as required, or $M$ is a normal subgroup of $\Gamma$. In the latter case $M$ is contained in the core $K$ of $N$ in $\Gamma$, so to prove the result we need to show that we can choose $\alpha$ so that $M\not\le K$. Since $N$ is not normal in $\Gamma$ we have $N\setminus K\ne\emptyset$, so choose any element $g\in N\setminus K$, and define $f:=g\phi\in F_{\infty}$. Then we can choose $\alpha:F_{\infty}\to A$ so that all of the (finitely many) free generators of $F_{\infty}$ appearing in $f$ are in $\ker(\alpha)$, and hence $g\in M$. Thus $M\not\le K$, as required.
\end{proof}

\begin{rema}\label{CARPremark}
In fact, if $A\ne 1$ we can choose such epimorphisms $\alpha$ with $2^{\aleph_0}$ different kernels, lifting back to distinct subgroups $M$ of $\Gamma$; these all have normaliser $N$, which is its own normaliser in $\Gamma$, so they are mutually non-conjugate, giving us $2^{\aleph_0}$ non-isomorphic objects realising $A$. However, if $A=1$ we obtain only one object, corresponding to $M=N$.
\end{rema}

\begin{theo}\label{freeSARP}
If  a group $\Gamma$ has an epimorphism onto a non-abelian free group then it has the strong automorphism realisation property.
\end{theo}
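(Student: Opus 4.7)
The plan is to reduce to the case $F_2$ via Lemma \ref{epi} and then apply Propositions \ref{FARPprop} and \ref{CARP}. By Lemma \ref{epi} it suffices to prove SARP for a non-abelian free group $F$; and any such $F$ has $F_2$ as a quotient (send two free generators onto those of $F_2$ and the rest to $1$), so one further application of Lemma \ref{epi} reduces the theorem to showing that $F_2$ itself has SARP.

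For FARP I would apply Proposition \ref{FARPprop} using the epimorphisms $\phi_n\colon F_2\twoheadrightarrow A_n$, which exist for every $n\ge 5$ because $A_n$ is $2$-generated. Taking $N_n=\phi_n^{-1}(A_{n-1})$ with $A_{n-1}$ a point stabiliser in the primitive natural action of $A_n$ on $n$ letters, $N_n$ is maximal of index $n$ in $F_2$, and its core is $K_n=\ker\phi_n$, so $|N_n:K_n|=(n-1)!/2\to\infty$; the Nielsen--Schreier formula gives $N_n$ free of rank $n+1$, hence surjecting onto $F_d$ whenever $d\le n+1$. The hypotheses of Proposition \ref{FARPprop} are thus satisfied.

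For CARP I would combine two ingredients. First, a consequence of Marshall Hall's theorem (finitely generated subgroups of free groups are closed in the profinite topology): if $N$ is a finitely generated maximal subgroup of $F_2$ of infinite index, then $N$ is an intersection of finite-index subgroups containing it, each of which is $N$ or $F_2$ by maximality; since $N$ has infinite index this forces $N=F_2$, a contradiction. Hence every maximal subgroup of $F_2$ of infinite index is free of countably infinite rank and so automatically admits an epimorphism onto $F_\infty$. Second, to produce a non-normal maximal subgroup of infinite index I would use the epimorphism $\phi\colon F_2=\langle x,y\rangle\twoheadrightarrow G\le{\rm Sym}(\mathbb{Z})$ sending $x$ to the shift $n\mapsto n+1$ and $y$ to the transposition $(0,1)$. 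Since $\phi(x^k y x^{-k})=(k,k+1)$, the image $G$ contains every finitary permutation of $\mathbb{Z}$, and together with the shift this gives a faithful $2$-transitive (hence primitive) action of $G$ on $\mathbb{Z}$. The stabiliser $H={\rm Stab}_G(0)$ is therefore a maximal subgroup of $G$ of infinite index, and it is non-normal because it is non-trivial (for example, $(1,2)\in H$) while the action is faithful and transitive. Pulling back, $N:=\phi^{-1}(H)$ is a non-normal maximal subgroup of $F_2$ of infinite index, so Proposition \ref{CARP} applies and yields CARP.

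The main obstacle will be the CARP step: FARP reduces to standard facts about finite quotients of $F_2$, whereas CARP requires both the profinite-closure input (to guarantee the infinite rank of the point stabiliser) and the explicit construction of an infinite primitive permutation representation of $F_2$.
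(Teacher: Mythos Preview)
Your proof is correct and follows the paper's overall architecture: reduce to $F_2$ via Lemma~\ref{epi}, verify the hypotheses of Proposition~\ref{FARPprop} using a family of finite primitive quotients, and verify those of Proposition~\ref{CARP} using the very same shift--transposition representation of $F_2$ on $\mathbb{Z}$ that the paper uses. The FARP step differs only cosmetically (you use $A_n$ where the paper uses $S_n$).

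The one genuine difference is in how you show that the point stabiliser $N$ in the infinite primitive representation surjects onto $F_\infty$. The paper computes $N$ explicitly: since $\langle X\rangle$ acts regularly on $\mathbb{Z}$, its powers form a Schreier transversal, and Reidemeister--Schreier gives a free basis for $N$ directly, visibly of infinite rank. You instead argue abstractly via Marshall Hall's theorem (free groups are LERF): a finitely generated maximal subgroup of infinite index would be an intersection of finite-index overgroups, each forced by maximality to equal $F_2$, a contradiction. Your route is slicker and avoids any computation, at the cost of importing a nontrivial separability result; the paper's route is self-contained and also exhibits the free generators of $N$, which is useful elsewhere (e.g.\ in Remark~\ref{gpP} and \S\ref{embedding}).
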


\begin{proof}
By Lemma~\ref{epi} it is sufficient to prove this for $\Gamma=F_2=\langle X, Y\mid -\rangle$, since this is an epimorphic image of all other non-abelian free groups. We first prove that $\Gamma$ satisfies the FARP by showing that it satisfies the hypotheses of Proposition~\ref{FARPprop}.

For each $n\ge 3$ let $\theta_n:\Gamma\to S_n$ be the epimorphism defined by
\[X\mapsto x=(1, 2, \ldots, n),\quad Y\mapsto y=(1,2),\] 
and let $N_n$ be the subgroup of $\Gamma$ fixing $1$. Since $S_n$ is transitive on $\{1, 2,\ldots,n\}$ we have $|\Gamma:N_n|=n$, and since $S_n$ is primitive $N_n$ is a maximal subgroup of $\Gamma$. If $K_n$ is the core of $N_n$ in $\Gamma$ then $N_n/K_n\cong S_{n-1}$, so $|N_n:K_n|=(n-1)!$. By a theorem of Schreier (see~\cite[Ch.~I, Prop.~3.9]{LS}), $N_n\cong F_{n+1}$. Thus the hypotheses of Proposition~\ref{FARPprop} are satisfied, so $\Gamma$ satisfies the FARP.

We now use Proposition~\ref{CARP} to show that $\Gamma$ satisfies the CARP. Let us define a representation $\theta: \Gamma\to P$ of $\Gamma$ as a transitive permutation group $P$ on $\mathbb Z$, where $X$ acts as the translation $x: i\mapsto i+1$, and $Y$ induces the transposition $y=(0,1)$. Since $y_k:=y^{x^k}$ is the transposition $(k,k+1)$, $P$ contains the group of all permutations of $\mathbb Z$ with finite support. It follows that $P$ is doubly transitive, and hence primitive, so the subgroup $N_{\infty}$ of $\Gamma$ fixing $0$ is a non-normal maximal subgroup.

Since $\langle X\rangle$ acts regularly on $\mathbb Z$, its elements form a Schreier system of coset representatives for $N_{\infty}$ in $\Gamma$. The Reidemeister--Schreier algorithm~\cite[\S II.4]{LS} then shows that $N_{\infty}$ has generators $Y_k:=Y^{X^k}$ for $k\in{\mathbb Z}\setminus\{0, -1\}$, together with $XY^{\pm 1}$, and that there are no defining relations between them. Thus $N_{\infty}\cong F_{\infty}$, so the hypotheses of Proposition~\ref{CARP} are satisfied and the result follows.
\end{proof}

\begin{rema}
Although this paper is mainly about categories of maps and hypermaps, we note here that Theorem~\ref{freeSARP} applies to the fundamental groups $\Gamma$ of many topological spaces, so their categories of coverings satisfy the SARP. Examples include compact orientable surfaces of genus $g$ with $k$ punctures, where $2g+k\ge 3$. Taking $g=0$, $k=3$ shows that the category $\mathfrak D$ of all dessins satisfies the FARP (see Theorem~\ref{realisation}(a) for a more specific result); in fact, Cori and Mach\`\i\/~\cite{CoMa} proved that every finite group is the automorphism group of a finite oriented hypermap, two years before Grothendieck introduced dessins in~\cite{Gro}.

Hidalgo~\cite{Hid} has proved the stronger result that every action of a finite group $A$ by orientation-preserving self-homeomorphisms of a compact oriented surface $S$ is topologically equivalent to the automorphism group of a dessin. One way to see this is to triangulate $S/A$, with all critical values of the projection $\pi:S\to S/A$ among the vertices, none of which has valency $1$, and then to add an edge to an additional $1$-valent vertex $v$ in the interior of a face. This gives a map (that is, a dessin with $q=2$) on $S/A$ which lifts via $\pi$ to a dessin $\mathcal D$ on $S$ with $A\le{\rm Aut}({\mathcal D})$. The only $1$-valent vertices in $\mathcal D$ are the $|A|$ vertices in $\pi^{-1}(v)$. These are permuted by ${\rm Aut}({\mathcal D})$, with $A$ and hence ${\rm Aut}({\mathcal D})$ acting transitively; however, the stabiliser of a $1$-valent vertex (in any dessin) must be the identity, so ${\rm Aut}({\mathcal D})=A$, as required. By starting with inequivalent triangulations of $S/A$ one can obtain infinitely many non-isomorphic dessins realising this action of $A$.
\end{rema}

\begin{rema}\label{gpP}
The permutation group $P$ appearing in the proof of Theorem~\ref{freeSARP} has some interesting properties. The normal closure of $y$ in $P$ is the group $F:={\rm FSym}({\mathbb Z})$ of all finitary permutations of $\mathbb Z$ (those permutations with finite support). As an infinite but locally finite group, $F$ is not finitely generated. Regarding it as the union of an ascending tower of finite symmetric groups, and using their standard presentation~\cite[(6.22)]{CM}, we see that $F$ has generators $y_k=(k,k+1)$ for $k\in{\bf Z}$, with defining relations $(y_ky_l)^{m_{kl}}=1$ where $m_{kl}=1, 3$ or $2$ as $|k-l|=0, 1$ or otherwise, so it can be regarded as a Coxeter group of infinite rank. It is complemented in $P$ by the infinite cyclic group $\langle x\rangle$, which induces the automorphism $y_k\mapsto y_{k+1}$ of $F$ by conjugation. Thus $P$ has generators $x, y$ with defining relations $y^2=(yy^x)^3=(yy^{x^i})^2=1\;(i\ge 2)$. The elements of $P$ are those permutations of $\mathbb Z$ which are `almost translations', that is, which differ from a translation $x^i$ in only finitely many places. We will revisit $P$ in the final section of this paper, to consider the maps associated with it.
\end{rema}

%%%%%%%%%%%%%%%%%

\section{The FARP for hyperbolic triangle groups}\label{FARP}

\iffalse

This can be deduced from a more general and very deep result~\cite[Theorem~2.1]{BeLu} of Belolipetsky and Lubotzky, which implies the FARP for every finitely generated group $\Gamma$ which is `large' in the sense of having a normal subgroup of finite index with an epimorphism onto a non-abelian free group. Each cocompact hyperbolic triangle group $\Gamma$ is large since, being resldually finite with only finitely many conjugacy classes of torsion elements (represented by the powers of its standard elliptic generators), $\Gamma$ has a torsion-free normal subgroup of finite index, which must be a surface group of genus $g\ge 2$ and therefore can be mapped onto the free group $F_g$; the extension to non-cocompact groups and extended triangle groups is straightforward. However, the proof of~\cite[Theorem~2.1]{BeLu} is long, delicate and non-constructive, so here we offer a shorter, more direct argument, specific to the context of this paper.

 \begin{theo}\label{hypFARP}
Each triangle group $\Delta(p,q,r)$ or extended triangle group $\Delta[p,q,r]$ of hyperbolic type, together with its associated category of oriented or unoriented hypermaps, has the finite automorphism realisation property.
\end{theo}

\begin{proof}

\fi

In this section we prove Theorem~\ref{realisation}(a), starting with hyperbolic triangle groups $\Gamma=\Delta(p, q,r)$. First assume that $\Gamma$ (acting on the hyperbolic plane) is cocompact, that is, its periods $p, q$ and $r$ are all finite. By Dirichlet's Theorem on primes in an arithmetic progression, there are infinitely many primes $n\equiv -1$ mod $(l)$, where $l:={\rm lcm}\{2p,2q,2r\}$. For each such $n$ there is an epimorphism $\theta_n:\Gamma\to  {\rm PSL}_2(n)$ sending the standard generators $X, Y$ and $Z$ of $\Gamma$ to elements of ${\rm PSL}_2(n)$ of orders $p, q$ and $r$ (see~\cite[Corollary~C]{Gar}, for example). This gives an action of $\Gamma$ on the projective line ${\mathbb P}^1(\mathbb F_n)$, which is doubly transitive and hence primitive, so the subgroup $N_n$ of $\Gamma$ fixing $\infty$ is a non-normal maximal subgroup of index $n+1$. Since $p, q$ and $r$ all divide $(n+1)/2$, the generators $X, Y$ and $Z$ induce semi-regular permutations on ${\mathbb P}^1(\mathbb F_n)$, with all their cycles of length $p, q$ or $r$. Thus no non-identity powers of $X$, $Y$ or $Z$ have fixed points, so by a theorem of Singerman~\cite{Sin} $N_n$ is a surface group
\[N_n=\langle A_i, B_i\;(i=1,\ldots, g)\mid \prod_{i=1}^g[A_i,B_i]=1\rangle\]
of genus $g$ given by the Riemann--Hurwitz formula:
\begin{equation}\label{RHeqn}
2(g-1)=(n+1)\left(1-\frac{1}{p}-\frac{1}{q}-\frac{1}{r}\right).
\end{equation}
This shows that $g\to\infty$ as $n\to\infty$. Now we can map $N_n$ onto the free group $F_g$ by sending the generators $A_i$ to a free basis, and the generators $B_i$ to $1$. The core $K_n=\ker(\theta_n)$ of $N_n$ in $\Gamma$ satisfies $|N_n:K_n|=|{\rm PSL}_2(n)|/(n+1)=n(n-1)/2$, so Proposition~\ref{FARPprop} can be applied to give the result.

Now assume that $\Gamma$ is not cocompact, with $k$ infinite periods $p, q, r$ for some $k=1, 2$ or $3$. We can adapt the above argument by first choosing an infinite set of primes $n\ge 13$ such that any finite periods of $\Gamma$ divide $(n+1)/2$, as before. For each such $n$ we can map $\Gamma$ onto a cocompact triangle group $\Gamma_n$, where each infinite period of $\Gamma$  is replaced with $(n+1)/2$. Since $(n+1)/2\ge 7$, the triangle group $\Gamma_n$ is also hyperbolic, so as before there is an epimorphism $\Gamma_n\to {\rm PSL}_2(n)$, giving (by composition) a primitive action of $\Gamma$ on ${\mathbb P}^1(\mathbb F_n)$. Again, no non-identity powers of any elliptic generators among $X, Y$ and $Z$ have fixed points, but any parabolic generator (one of infinite order) now induces two cycles of length $(n+1)/2$ on ${\mathbb P}^1(\mathbb F_n)$, so by~\cite{Sin} it introduces two parabolic generators $P_i$ into the standard presentation of the point-stabiliser $N_n$ in $\Gamma$. We therefore have
\[N_n=\langle A_i, B_i\;(i=1,\ldots, g), P_i\;(i=1,\ldots, 2k)\mid \prod_{i=1}^g[A_i,B_i].\prod_{i=1}^{2k}P_i=1\rangle,\]
a free group of rank $2g+2k-1$, where the Riemann--Hurwitz formula now gives
\begin{equation}
2(g-1)+2k=(n+1)\left(1-\frac{1}{p}-\frac{1}{q}-\frac{1}{r}\right)
\end{equation}
with $1/\infty=0$. Since $k\le 3$ we have $g\to\infty$ as $n\to\infty$, so Proposition~\ref{FARPprop} again gives the result.

The proof when $\Gamma$ is an extended triangle group $\Delta[p,q,r]$ of hyperbolic type is similar to that for $\Delta(p,q,r)$. If $\Gamma$ is cocompact then, as before, we consider epimorphisms $\theta_n:\Gamma^+=\Delta(p,q,r)\to{\rm PSL}_2(n)$ for primes $n\equiv -1$ mod~$(l)$, where now $l={\rm lcm}\{2p, 2q, 2r, 4\}$; the stabilisers of $\infty$ form a series of maximal subgroups $N_n$ of index $n+1$ in $\Gamma^+$. By an observation of Singerman~\cite{Sin74} the core $K_n$ of $N_n$ in $\Gamma^+$ is normal in $\Gamma$, with quotient $\Gamma/K_n$ isomorphic to ${\rm PSL}_2(n)\times C_2$ or ${\rm PGL}_2(n)$ as the automorphism of ${\rm PSL}_2(n)$ inverting $x$ and $y$ is inner or not. Thus $\theta_n$ extends to a homomorphism $\theta^*_n:\Gamma\to {\rm PGL}_2(n)$; in the first case its image is ${\rm PSL}_2(n)$ and its kernel $K^*_n$ contains $K_n$ with index $2$, and in the second case it is an epimorphism with kernel $K^*_n=K_n$. In either case the action of $\Gamma^+$ on ${\mathbb P}^1(\mathbb F_n)$ extends to an action of $\Gamma$, and the stabiliser $N^*_n$ of $\infty$ is a maximal subgroup of index $n+1$ in $\Gamma$, containing $N_n$ with index $2$. In order to apply Proposition~\ref{FARPprop} to these subgroups $N^*_n$ it is sufficient to show that they map onto free groups of unbounded rank.

Now $N^*_n$ is a non-euclidean crystallographic (NEC) group, and $N_n$ is its canonical Fuchsian subgroup of index $2$. We can obtain the signature of $N^*_n$ by using Hoare's extension to NEC groups~\cite{Hoa} of Singerman's results~\cite{Sin}  on subgroups of Fuchsian groups. As before, $N_n$ is a surface group of genus $g$ given by (\ref{RHeqn}). There are no elliptic or parabolic elements in $N_n$, and hence none in $N^*_n$. The reflections $R_i\;(i=0,1,2)$ generating $\Gamma$ induce involutions on $\mathbb P^1(\mathbb F_n)$, each with at most two fixed points. If $\Gamma/K_n\cong {\rm PSL}_2(n)\times C_2$ these involutions are elements of ${\rm PSL}_2(n)$, so they are even permutations by the simplicity of this group, and hence they have no fixed points since $n+1\equiv 0$ mod~$(4)$. Thus $N^*_n$ contains no reflections; however, it is not a subgroup of $\Gamma^+$, so it is a non-orientable surface group
\[N_n^*=\langle G_1,\ldots, G_{g^*}\mid G_1^2\ldots G_{g^*}^2=1\rangle\]
generated by glide-reflections $G_i$, with its genus $g^*$ given by the Riemann--Hurwitz formula
\[2-2g=2(2-g^*)\]
for the inclusion $N_n\le N_n^*$, so $g^*=g+1$. Thus there is an epimorphism $N^*_n\to F_d=\langle X_1,\ldots, X_d\mid -\rangle$ where $d=\lfloor g^*/2\rfloor$, given by $G_{2i-1}\mapsto X_i$ and $G_{2i}\mapsto X_i^{-1}$ for $i=1,\ldots, d$ and $G_{g^*}\mapsto 1$ if $g^*$ is odd. Since $g\to\infty$ as $n\to\infty$, we have $d\sim g/2\to\infty$ also, so Proposition~\ref{FARPprop} gives the result.

Similar arguments also deal with the case where $\Gamma/K_n\cong{\rm PGL}_2(n)$. Since $n\equiv -1$ mod~$(4)$, each generating reflection $R_i$ of $\Gamma$ induces an odd permutation of $\mathbb P^1(\mathbb F_n)$ with two fixed points, contributing two reflections to the standard presentation of the NEC group $N^*_n$. The Riemann--Hurwitz formula for the inclusion $N_n\le N^*_n$ then takes the form
\[2-2g=2(2-h^*+s),\]
where $h^*=2g^*$ or $g^*$ as $N^*_n$ has an orientable or non-orientable quotient surface of genus $g^*$ with $s$ boundary components for some $s\le 6$. Thus $h^*\sim g\to\infty$ as $n\to\infty$. We obtain an epimorphism $N^*_n\to F_d$ with $d\sim h^*/2$ as in the orientable or non-orientable cases above, this time by mapping the additional standard generators of $N^*_n$, associated with the boundary components, to $1$, so Proposition~\ref{FARPprop} again gives the result. Finally, in the non-cocompact case, any periods $p, q, r=\infty$ can be dealt with as above for $\Delta(p,q,r)$. This completes the proof of Theorem~\ref{realisation}(a).
%\end{proof}

\begin{rema}
The restrictions on the prime $n$ in the above proof are mainly for convenience of exposition, rather than necessity. Relaxing them would allow $X, Y$ and $Z$ to have one or two fixed points on $\mathbb P^1(\mathbb F_n)$, thus adding extra standard generators to $N_n$ and $N^*_n$ and extra summands to the Riemann--Hurwitz formulae used. However, these extra terms are bounded as $n\to\infty$, so asymptotically they make no significant difference. It seems plausible that an argument based on the \v Cebotarev Density Theorem would show that, given $\Gamma=\Delta[p,q,r]$, the cases $\Gamma/K_n\cong {\rm PSL}_2(n)\times C_2$ and ${\rm PGL}_2(n)$ each occur for infinitely many primes $n\equiv-1$ mod~$(l)$, so that only one case would need to be considered; however, the resulting shortening of the proof would not justify the effort.
\end{rema}

%%%%%%%%%%%%%%%%%%%

\section{The CARP for hyperbolic triangle groups}

%We now consider the CARP for hyperbolic triangle groups:
\iffalse
\begin{prob}\label{question}
Does each hyperbolic triangle group (and hence the category of oriented hypermaps of that type) have the countable automorphism realisation property?
\end{prob}
\fi

In this section we consider Theorem~\ref{realisation}(b) for hyperbolic triangle groups $\Gamma=\Delta(p,q,r)$. Having proved that they have all the FARP, it is sufficient to consider the CARP. We need to show that $\Gamma$ satisfies the hypotheses of Proposition~\ref{CARP}, that is, it has a non-normal maximal subgroup which has an epimorphism onto $F_{\infty}$.
Given $\Gamma$, it is easy to find maximal subgroups of finite index by mapping $\Gamma$ onto primitive permutation groups of finite degree; however, such subgroups are finitely generated, so they do not map onto $F_{\infty}$; a maximal subgroup of infinite index is needed, and these seem to be harder to find. They certainly exist: by a result of Ol'shanski\u\i\/~\cite{Ols00}, $\Gamma$ has a quotient $Q\ne 1$ with no proper subgroups of finite index; by Zorn's Lemma, $Q$ has maximal subgroups, which must have infinite index, and these lift back to maximal subgroups of infinite index in $\Gamma$. These are not normal (otherwise they would have prime index), but does one of them map onto $F_{\infty}$? Conceivably, they could be generated by elliptic elements, which have finite order, in which case they would not map onto a free group of any rank. As a first step we consider the case where $\Gamma$ is not cocompact, that is, it has an infinite period, so it is a free product of two cyclic groups.

\begin{theo}\label{CpCq}
If $\Gamma$ is a hyperbolic triangle group $\Delta(p,q,r)$ with at least one infinite period, then $\Gamma$ and the corresponding category of oriented hypermaps have the countable automorphism realisation property.
\end{theo}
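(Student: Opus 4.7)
The plan is to verify the hypotheses of Proposition~\ref{CARP}: produce a non-normal maximal subgroup of $\Gamma$ admitting an epimorphism onto $F_\infty$. If all three periods are infinite then $\Gamma\cong F_2$ and Theorem~\ref{freeSARP} finishes the job, so I may assume, without loss of generality, that $r=\infty$ and $\Gamma\cong C_p*C_q$ with $(p,q)$ hyperbolic and not both equal to $\infty$. The key idea is to exhibit a transitive, primitive, faithful permutation representation $\theta:\Gamma\to{\rm Sym}(\mathbb{Z})$ in which $\theta(X)$ and $\theta(Y)$ act without fixed points, i.e.\ each as a product of full $p$- and $q$-cycles respectively, or as a translation when the order is $\infty$.

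For the construction I mimic the strategy in the proof of Theorem~\ref{freeSARP}. When one of the orders is infinite (say $p=\infty$), I take $X$ to act as the translation $i\mapsto i+1$ on $\mathbb{Z}$ and $Y$ as a permutation of order $q$ consisting entirely of $q$-cycles, arranged so that $\langle X,Y\rangle$ contains ${\rm FSym}(\mathbb{Z})$; the resulting action is then $2$-transitive and so primitive. When both $p$ and $q$ are finite, neither generator can be a translation, so one must interleave their cycle structures aperiodically to destroy every potential block system, or equivalently to ensure $\langle X,Y\rangle$ still contains ${\rm FSym}(\mathbb{Z})$.

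Once $\theta$ is in hand, set $N:={\rm Stab}_\Gamma(0)$. Primitivity gives the maximality of $N$, and faithfulness together with transitivity render $N$ non-normal: any normal subgroup contained in a point stabiliser must coincide with the (trivial) kernel of $\theta$, whereas $N\neq 1$. By Kurosh's subgroup theorem every torsion element of $C_p*C_q$ is conjugate to a nontrivial power of $X$ or $Y$, and by construction no such element fixes a point; hence $N$ is torsion-free and, by Kurosh again, is a free group. A Reidemeister--Schreier computation analogous to the one for $F_2$ in Theorem~\ref{freeSARP}, or equivalently the fact that $[\Gamma:N]=\aleph_0$ combined with $\chi(\Gamma)=1/p+1/q-1<0$, forces the rank of $N$ to be infinite, so $N\cong F_\infty$. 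Proposition~\ref{CARP} then yields the CARP for $\Gamma$, and Theorem~\ref{isothm} transfers the conclusion to the associated category of oriented hypermaps of type $(p,q,\infty)$.

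The main obstacle is precisely the primitivity of $\langle X,Y\rangle$ when both $p$ and $q$ are finite: a naive periodic arrangement of $p$- and $q$-cycles leaves the common period ${\rm lcm}(p,q)$ as a block size, and so kills primitivity. A carefully aperiodic interleaving is required, while the cycle lengths must remain exactly $p$ and $q$ everywhere in order to keep the point stabiliser torsion-free. This is considerably more delicate than the $F_2$ case, where one generator could simply be a translation of $\mathbb{Z}$.
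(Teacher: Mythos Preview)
Your proposal identifies the right reduction (Proposition~\ref{CARP}) but leaves the central construction undone. You explicitly say that when $p$ and $q$ are both finite ``a carefully aperiodic interleaving is required'' and that this is ``considerably more delicate'', yet you do not produce such an interleaving, nor do you verify that any specific arrangement actually yields a primitive action. That construction \emph{is} the proof; without it, the argument is a programme rather than a demonstration. Even in your easier case $p=\infty$ you only say $Y$ should be ``arranged so that $\langle X,Y\rangle$ contains ${\rm FSym}(\mathbb Z)$'' without exhibiting such a $Y$ or checking that a fixed-point-free $Y$ of order $q$ with this property exists.

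There is also a strategic difference from the paper that makes your route harder than necessary. You insist that $\theta(X)$ and $\theta(Y)$ be fixed-point-free so that $N$ is torsion-free and hence free. The paper does not do this: it lets $X$, $Y$ and $Z$ all have fixed points, so the stabiliser $N$ is a free product of finite and infinite cyclic groups, and then simply kills the finite-order free factors to obtain an epimorphism $N\to F_\infty$. More importantly, the paper exploits the one generator that is \emph{guaranteed} to have infinite order, namely $Z$ (since $r=\infty$), and builds an explicit planar map or bipartite map in which $Z$ has an infinite cycle $C$ identified with $\mathbb Z$; primitivity is then checked by showing that any invariant equivalence relation restricts to $C$ as congruence mod~$n$, and the combinatorics of the map (carefully placed ``flowers'') rule out $1<n<\infty$. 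This sidesteps entirely the problem you flag of building primitivity out of two finite-order generators. Your Euler-characteristic remark for the rank of $N$ is also not quite a proof at infinite index; the paper instead reads off infinitely many infinite-order free factors directly from the infinitely many $1$-valent faces of the constructed map.
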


\begin{proof}
By Proposition~\ref{CARP} it is sufficient  to show that $\Gamma$ has a non-normal maximal subgroup $N$ with an epimorphism $N\to F_{\infty}$.  Using the usual isomorphisms between triangle groups, we may assume that $r=\infty$, and that $p\ge 3$ and $q\ge 2$, so that $\Gamma\cong C_p*C_q$ with $p, q\in{\mathbb N}\cup\{\infty\}$.

\medskip

\noindent{\sl Case 1: $p=3, q=2$.} First we consider the case where $p=3$ and $q=2$, so that $\Gamma$ is isomorphic to the modular group ${\rm PSL}_2({\mathbb Z})\cong C_3*C_2$. We can construct a maximal subgroup $N$ of infinite index in $\Gamma$ as the point stabiliser in a primitive permutation representation of $\Gamma$ of infinite degree. Since $\Gamma$ is the parent group
\[\Delta(3,2,\infty)=\langle X, Y, Z\mid X^3=Y^2=XYZ=1\rangle\]
for the category ${\mathfrak C}={\mathfrak M}_3^+$ of oriented trivalent maps, we can take $N$ to be the subgroup of $\Gamma$ corresponding to an infinite map $\mathcal N_3$ in $\mathfrak C$. 

 \iffalse First we consider the case where $p=3$ and $q=2$, so that $\Gamma$ is isomorphic to the modular group ${\rm PSL}_2({\mathbb Z})\cong C_3*C_2$. Since $\Gamma$ is the parent group
\[\Delta(3,2,\infty)=\langle X, Y, Z\mid X^3=Y^2=XYZ=1\rangle\]
for the category ${\mathfrak C}={\mathfrak M}_3^+$ of oriented trivalent maps, we can construct such a maximal subgroup $N$ of infinite index in $\Gamma$ as the subgroup corresponding to a map $\mathcal N_3$ in $\mathfrak C$, or equivalently as the point stabiliser in a permutation representation of $\Gamma$, constructed to be primitive and of infinite degree. \fi

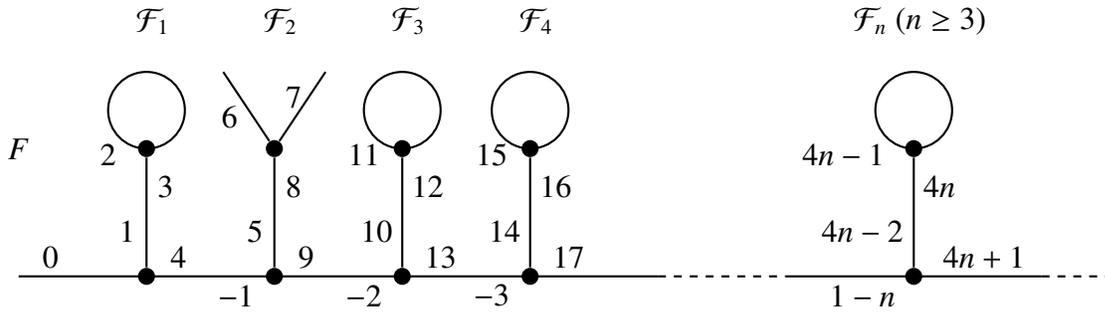
\begin{figure}[h!]
\begin{center}
\begin{tikzpicture}[scale=0.17, inner sep=0.8mm]

%\node (-2) at (-20,0) [shape=circle, fill=black] {};
%\node (-1) at (-10,0) [shape=circle, fill=black] {};
%\node (0) at (0,0) [shape=circle, fill=black] {};
\node (1) at (10,0) [shape=circle, fill=black] {};
\node (2) at (20,0) [shape=circle, fill=black] {};
\node (3) at (30,0) [shape=circle, fill=black] {};
\node (4) at (40,0) [shape=circle, fill=black] {};
%\node (5) at (50,0) [shape=circle, fill=black] {};
%\node (6) at (60,0) [shape=circle, fill=black] {};
\node (7) at (70,0) [shape=circle, fill=black] {};
%\node (8) at (80,0) [shape=circle, fill=black] {};
%\node (9) at (90,0) [shape=circle, fill=black] {};

%\node (-2-) at (-20,-10) [shape=circle, fill=black] {};
%\node (-1-) at (-10,-10) [shape=circle, fill=black] {};
\node (1+) at (10,10) [shape=circle, fill=black] {};
\node (2+) at (20,10) [shape=circle, fill=black] {};
\node (3+) at (30,10) [shape=circle, fill=black] {};
\node (4+) at (40,10) [shape=circle, fill=black] {};
%\node (5+) at (50,10) [shape=circle, fill=black] {};
%\node (6+) at (60,10) [shape=circle, fill=black] {};
\node (7+) at (70,10) [shape=circle, fill=black] {};
%\node (8-) at (80,-10) [shape=circle, fill=black] {};
%\node (9+) at (90,10) [shape=circle, fill=black] {};

\draw [thick] (0,0) to (50,0);
%\draw [thick,dashed] (-25,0) to (-31,0);
\draw [thick,dashed] (50,0) to (60,0);
\draw [thick] (60,0) to (80,0);
\draw [thick,dashed] (80,0) to (86,0);
%\draw [thick] (-2) to (-2-);
%\draw [thick] (-1) to (-1-);
%\draw [thick] (0,-10) to (0,10);
\draw [thick] (1) to (1+);
\draw [thick] (2) to (2+);
\draw [thick] (3) to (3+);
\draw [thick] (4) to (4+);
%\draw [thick] (5) to (5-);
%\draw [thick] (6) to (6+);
\draw [thick] (7) to (7+);
%\draw [thick] (8) to (8-);
%\draw [thick] (9) to (9+);

\draw [thick] (13,13) arc (0:360:3);
\draw [thick] (33,13) arc (0:360:3);
\draw [thick] (43,13) arc (0:360:3);
%\draw [thick] (65,15) arc (0:360:5);
\draw [thick] (73,13) arc (0:360:3);

\draw [thick] (16,16) to (2+) to (24,16);

\node at (37,-1.5) {$-3$};
\node at (27,-1.5) {$-2$};
\node at (17,-1.5) {$-1$};
\node at (2.5,1.5) {$0$};
\node at (8.5,3.5) {$1$};
\node at (7,9.5) {$2$};
\node at (11.5,7) {$3$};
\node at (12.5,1.5) {$4$};
\node at (18.5,3.5) {$5$};
\node at (16.5,12.5) {$6$};
\node at (21.5,14) {$7$};
\node at (21.5,7) {$8$};
\node at (22.5,1.5) {$9$};
\node at (28,3.5) {$10$};
\node at (27,9.5) {$11$};
\node at (32,7) {$12$};
\node at (33,1.5) {$13$};
\node at (38,3.5) {$14$};
\node at (37,9.5) {$15$};
\node at (42,7) {$16$};
\node at (43,1.5) {$17$};
\node at (66,3.5) {$4n-2$};
\node at (64.5,9.5) {$4n-1$};
\node at (72,7) {$4n$};
\node at (75.5,1.5) {$4n+1$};
\node at (66,-1.5) {$1-n$};

\node at (0,10) {$F$};
\node at (10.5,20) {${\mathcal F}_1$};
\node at (20.5,20) {${\mathcal F}_2$};
\node at (30.5,20) {${\mathcal F}_3$};
\node at (40.5,20) {${\mathcal F}_4$};
\node at (70.5,20) {${\mathcal F}_n\;(n\ge 3)$};

\end{tikzpicture}
\end{center}
\caption{The trivalent map ${\mathcal N}_3$} 
\label{mapN3}
\end{figure}

We will take $\mathcal N_3$ to be the infinite planar trivalent map shown in Figure~\ref{mapN3}, oriented with the positive (anticlockwise) orientation of the plane. The monodromy group $G=\langle x, y\rangle$ of this map gives a transitive permutation representation $\theta:\Gamma\to G, X\mapsto x, Y\mapsto y, Z\mapsto z$ of $\Gamma$ on the set $\Omega$ of directed edges of $\mathcal N_3$, with $x$ rotating them anticlockwise around their target vertices, and $y$ reversing their direction. The vertices, all of valency $3$, correspond to the $3$-cycles of $x$ (it has no fixed points). The edges correspond to the cycles of $y$, with three free edges corresponding to its fixed points and the other edges corresponding to its $2$-cycles. The faces correspond to the cycles of $z=yx^{-1}$, and in particular, the directed edge $\alpha$ labelled $0$ and fixed by $y$ is in an infinite cycle $C=(\ldots, \alpha z^{-1}, \alpha, \alpha z, \ldots)$ of $z$, corresponding to the unbounded face $F$ of $\mathcal N_3$; the directed edges $\alpha z^i$ in $C$ are indicated by integers $i$ in Figure~\ref{mapN3}. The unlabelled directed edges are fixed points of $z$, one incident with each $1$-valent face. The pattern seen in Figure~\ref{mapN3} repeats to the right in the obvious way. The `flowers' ${\mathcal F}_n\;(n\ge 1)$ above the horizontal axis continue indefinitely to the right, with ${\mathcal F}_n$ an identical copy of ${\mathcal F}_1$ for each $n\ge 3$; we will later need the fact that for each $n\ge 2$ the `stem' of ${\mathcal F}_n$ (the vertical edge connecting it to the horizontal axis) carries two directed edges in $C$, with only one of their two labels divisible by $n$.

\begin{lemm}\label{primitive}
The group $\Gamma$ acts primitively on $\Omega$.
\end{lemm}

\begin{proof}
Suppose that $\sim$ is a $\Gamma$-invariant equivalence relation on $\Omega$; we need to show that it is either the identity or the universal relation. Since $\alpha y=\alpha$, the equivalence class $E=[\alpha]$ containing $\alpha$ satisfies $Ey=E$. Since $\langle Z\rangle$ acts regularly on $C$ we can identify $C$ with $\mathbb Z$ by identifying each $\alpha z^i\in C$ with the integer $i$, so that $Z$ acts by $i\mapsto i+1$. Then $\sim$ restricts to a translation-invariant equivalence relation on $\mathbb Z$, which must be congruence mod~$(n)$ for some $n\in{\mathbb N}\cup\{\infty\}$, where we include $n=1$ and $\infty$ for the universal and  identity relations on $\mathbb Z$.

Suppose first that $n\in\mathbb N$, so $E\cap C$ is the subgroup $(n)$ of $\mathbb Z$. If $n=1$ then $C\subseteq E$. Now $Ex^{-1}$ is an equivalence class, and it contains $\alpha x^{-1}=1$; this is in $C$, and hence in $E$, so $Ex^{-1}=E$. We have seen that $Ey=E$, so $E=\Omega$ since $G=\langle x^{-1}, y\rangle$, and hence $\sim$ is the universal relation on $\Omega$.

We may therefore assume that $n>1$. The vertical stem of the flower ${\mathcal F}_n$ is an edge carrying two directed edges in $C$, with only one of its two labels divisible by $n$, so one is in $E$ whereas the other is not. However, these two directed edges are transposed by $y$, contradicting the fact that $Ey=E$.

Finally suppose that $n=\infty$, so that all elements of $C$ are in distinct equivalence classes, and hence the same applies to $Cy$. In particular, since $\alpha\in C\cap Cy$ we have $E\cap C=\{\alpha\}=E\cap Cy$. By inspection of Figure~\ref{mapN3}, $\Omega=C\cup Cy$ and hence $E=\{\alpha\}$. It follows that all equivalence classes for $\sim$ are singletons, so $\sim$ is the identity relation, as required.
\end{proof}

We now return to the proof of Theorem~\ref{CpCq}. It follows from Lemma~\ref{primitive} that the subgroup $N=\Gamma_{\alpha}$ of $\Gamma$ fixing $\alpha$ is maximal. Clearly $N$ is not normal in $\Gamma$, since $G$ is not a regular permutation group, so it sufficient to find an epimorphism $N\to F_{\infty}$. One could use the Reidemeister--Schreier algorithm to find a presentation for $N$: truncation converts $\mathcal N_3$ into a coset diagram for $N$ in $\Gamma$, and then deleting edges to form a spanning tree yields a Schreier transversal. In fact a glance at Figure~\ref{mapN3} shows that $N$ is a free product of cyclic groups: three of these, corresponding to the fixed points of $y$ and generated by conjugates of $Y$, have order $2$, and there are infinitely many of infinite order, generated by conjugates of $Z$ and corresponding to the fixed points of $z$, that is, the $1$-valent faces of $\mathcal N_3$, one in each flower ${\mathcal F}_n$ for $n\ne 2$. By mapping the generators of finite order to the identity we obtain the required epimorphism $N\to F_{\infty}$.  

\medskip

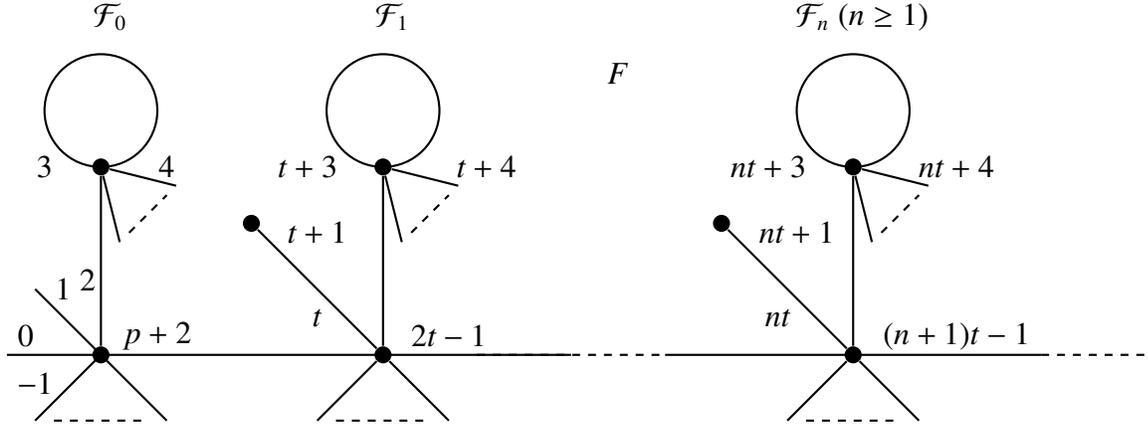
\begin{figure}[h!]
\begin{center}
\begin{tikzpicture}[scale=0.25, inner sep=0.8mm]

\node (3) at (30,0) [shape=circle, fill=black] {};
\node (4) at (45,0) [shape=circle, fill=black] {};
\node (4') at (38,7) [shape=circle, fill=black] {};
\node (7) at (70,0) [shape=circle, fill=black] {};
\node (7') at (63,7) [shape=circle, fill=black] {};
\node (3+) at (30,10) [shape=circle, fill=black] {};
\node (4+) at (45,10) [shape=circle, fill=black] {};
\node (7+) at (70,10) [shape=circle, fill=black] {};

\draw [thick] (25,0) to (55,0);
\draw [thick] (30,0) to (26.5,3.5);
\draw [thick] (30,0) to (26.5,-3.5);
\draw [thick] (30,0) to (33.5,-3.5);
\draw [thick, dashed] (27.5,-3.5) to (32.5,-3.5);
\draw [thick] (3+) to (31,6);
\draw [thick] (3+) to (34,9);
\draw [thick, dashed] (31.5,6.5) to (33.5,8.5);
\draw [thick] (4) to (4');
\draw [thick] (4) to (41.5,-3.5);
\draw [thick] (4) to (48.5,-3.5);
\draw [thick, dashed] (42.5, -3.5) to (47.5,-3.5);
\draw [thick] (4+) to (46,6);
\draw [thick] (4+) to (49,9);
\draw [thick, dashed] (46.5,6.5) to (48.5,8.5);
\draw [thick] (7) to (7');
\draw [thick] (7) to (66.5,-3.5);
\draw [thick] (7) to (73.5,-3.5);
\draw [thick] (7+) to (71,6);
\draw [thick] (7+) to (74,9);
\draw [thick, dashed] (71.5,6.5) to (73.5,8.5);
\draw [thick, dashed] (67.5, -3.5) to (72.5,-3.5);
\draw [thick, dashed] (50,0) to (60,0);
\draw [thick] (60,0) to (80,0);
\draw [thick,dashed] (80,0) to (86,0);

\draw [thick] (3) to (3+);
\draw [thick] (4) to (4+);

\draw [thick] (7) to (7+);

\draw [thick] (33,13) arc (0:360:3);
\draw [thick] (48,13) arc (0:360:3);
\draw [thick] (73,13) arc (0:360:3);

\node at (26.5,-1.5) {$-1$};
\node at (26,1) {$0$};
\node at (28,3.5) {$1$};
\node at (29.3,4) {$2$};
\node at (27,10) {$3$};
\node at (33.5,10) {$4$};
\node at (33,1) {$p+2$};
\node at (41.5,2) {$t$};
\node at (41.5,6.5) {$t+1$};
\node at (41,10) {$t+3$};
\node at (50.5,10) {$t+4$};
\node at (48.5,1) {$2t-1$};
\node at (66,2) {$nt$};
\node at (67,6.5) {$nt+1$};
\node at (65.5,10) {$nt+3$};
\node at (75.5,10) {$nt+4$};
\node at (75.5,1) {$(n+1)t-1$};

\node at (57.5,15) {$F$};
\node at (30.5,18) {${\mathcal F}_0$};
\node at (45.5,18) {${\mathcal F}_1$};
\node at (70.5,18) {${\mathcal F}_n\;(n\ge 1)$};

\end{tikzpicture}
\end{center}
\caption{The $p$-valent map ${\mathcal N}_p$, with $t:=p+3$} 
\label{mapNp}
\end{figure}

\noindent{\sl Case 2: Finite $p\ge 4, q=2$.} 
We now assume that $\Gamma$ is a Hecke group $C_p*C_2$ for some finite $p\ge 4$. Let ${\mathcal N}_p$ be the infinite $p$-valent planar map in Figure~\ref{mapNp}. Apart from ${\mathcal F}_0$, the flowers are all identical copies of ${\mathcal F}_1$, with a `leaf' growing out of its base and leading to a vertex of valency $1$, representing a fixed point of $x$. The `fans' indicated by short dashed lines represent however many free edges are needed in order that the incident vertex should have valency $p$, that is, $p-3$ free edges for vertices at the top of a stem, and $p-4$ for those at the base. As before, the elements $\alpha z^i$ of the cycle $C$ of $z$ containing the directed edge $\alpha=0$ are labelled with integers $i$;  to save space in the diagram only a few labels are shown. We define $t=p+3$, since a translation from a flower ${\mathcal F}_n\;(n\ge 1)$ to the next flower ${\mathcal F}_{n+1}$ adds that number to all labels.

The proof that the monodromy group $G=\langle x, y\rangle$ of ${\mathcal N}_p$ is primitive is very similar to that in Lemma~\ref{primitive} for $\mathcal N_3$. Any $\Gamma$-invariant equivalence relation $\sim$ on $\Omega$ restricts to $C$ as congruence mod~$(n)$ for some $n\in{\mathbb N}\cup\{\infty\}$. The equivalence class $E=[\alpha]$ satisfies $Ey=E$, so if $n\ne 1, \infty$ then the fact that $y$ transposes the directed edges labelled $nt$ and $nt+1$, with the first but not the second in $E=(n)$, gives a contradiction. If $n=1$ then $C\subseteq E$, so both $x$ and $y$ preserve $E$ and hence $\sim$ is the universal relation. If $n=\infty$ then $E\cap C=\{\alpha\}$, and hence also $E\cap Cy=\{\alpha\}$; but $\Omega=C\cup Cy$ and hence $E=\{\alpha\}$ and $\sim$ is the identity relation.

This shows that the subgroup $N$ of $\Gamma$ fixing $\alpha$ is maximal. As before, it is not normal, and it is a free product of cyclic groups, now of order $p$, $2$ or $\infty$, corresponding to the fixed points of $x, y$ and $z$ (infinitely many in each case). Sending the generators of finite order to the identity gives the required epimorphism $N\to F_{\infty}$.

\medskip

\noindent{\sl Case 3: Finite $p, q\ge 3$.} We modify the map ${\mathcal N}_p$ in the proof of Case~2 by removing the leaf attached to the base of each flower ${\mathcal F}_n\;(n\ge 1)$, adding a white vertex to every remaining edge (including one at the free end of each free edge), and finally adding edges incident with $1$-valent black vertices where necessary to ensure that all white vertices have valency $q$ or $1$. The resulting map ${\mathcal N}_{p,q}$ is shown in Figure~\ref{mapNpq}. 

\begin{figure}[h!]
\begin{center}
\begin{tikzpicture}[scale=0.28, inner sep=0.8mm]

\node (3) at (30,0) [shape=circle, fill=black] {};
\node (3+) at (30,10) [shape=circle, fill=black] {};
\node (3') at (37.5,4) [shape=circle, fill=black] {};
\node (3l) at (26,0) [shape=circle, draw] {};
\node (3ll) at (28,-3) [shape=circle, draw] {};
\node (3rl) at (32,-3) [shape=circle, draw] {};
\node (3+l) at (28,19) [shape=circle, fill=black] {};
\node (3+r) at (32,19) [shape=circle, fill=black] {};
\node (3w) at (30,5) [shape=circle, draw] {};
\node (3+w) at (30,16) [shape=circle, draw] {};
\node (3'w) at (37.5,0) [shape=circle, draw]{};
\node (3'l) at (35.5,-3) [shape=circle, fill=black] {};
\node (3'r) at (39.5,-3) [shape=circle, fill=black] {};
\node (3wu) at (33,6.5) [shape=circle, fill=black] {};
\node (3wl) at (33,3.5) [shape=circle, fill=black] {};

\node (4) at (45,0) [shape=circle, fill=black] {};
\node (4+) at (45,10) [shape=circle, fill=black] {};
\node (4') at (52.5,4) [shape=circle, fill=black] {};
\node (4ll) at (43,-3) [shape=circle, draw] {};
\node (4rl) at (47,-3) [shape=circle, draw] {};
\node (4+l) at (43,19) [shape=circle, fill=black] {};
\node (4+r) at (47,19) [shape=circle, fill=black] {};
\node (4w) at (45,5) [shape=circle, draw] {};
\node (4+w) at (45,16) [shape=circle, draw] {};
\node (4'w) at (52.5,0) [shape=circle, draw]{};
\node (4'l) at (50.5,-3) [shape=circle, fill=black] {};
\node (4'r) at (54.5,-3) [shape=circle, fill=black] {};
\node (4wu) at (48,6.5) [shape=circle, fill=black] {};
\node (4wl) at (48,3.5) [shape=circle, fill=black] {};

\node (7) at (70,0) [shape=circle, fill=black] {};
\node (7+) at (70,10) [shape=circle, fill=black] {};
\node (7') at (62.5,4) [shape=circle, fill=black] {};
\node (7ll) at (68,-3) [shape=circle, draw] {};
\node (7rl) at (72,-3) [shape=circle, draw] {};
\node (7+l) at (68,19) [shape=circle, fill=black] {};
\node (7+r) at (72,19) [shape=circle, fill=black] {};
\node (7w) at (70,5) [shape=circle, draw] {};
\node (7+w) at (70,16) [shape=circle, draw] {};
\node (7'w) at (62.5,0) [shape=circle, draw]{};
\node (7'l) at (60.5,-3) [shape=circle, fill=black] {};
\node (7'r) at (64.5,-3) [shape=circle, fill=black] {};
\node (7wu) at (73,6.5) [shape=circle, fill=black] {};
\node (7wl) at (73,3.5) [shape=circle, fill=black] {};

\draw [thick] (3l) to (3'w) to (4'w) to (55,0);

\draw [thick] (3'r) to (3'w) to (3'l);
\draw [thick, dashed] (29,-3) to (31,-3);
\draw [thick] (3rl) to (3) to (3ll);
\draw [thick, dashed] (36.5,-3) to (38.5,-3);
\draw [thick] (3+l) to (3+w) to (3+r);
\draw [thick, dashed] (29,19) to (31,19);
\draw [thick] (3wu) to (3w) to (3wl);
\draw [thick, dashed] (33,6) to (33,4);

\draw [thick] (4rl) to (4) to (4ll);
\draw [thick] (4'r) to (4'w) to (4'l);
\draw [thick, dashed] (44,-3) to (46,-3);
\draw [thick, dashed] (51.5,-3) to (53.5,-3);
\draw [thick] (4+l) to (4+w) to (4+r);
\draw [thick, dashed] (44,19) to (46,19);
\draw [thick] (4wu) to (4w) to (4wl);
\draw [thick, dashed] (48,6) to (48,4);

\draw [thick] (7rl) to (7) to (7ll);
\draw [thick] (7'r) to (7'w) to (7'l);
\draw [thick, dashed] (69,-3) to (71,-3);
\draw [thick, dashed] (61.5,-3) to (63.5,-3);
\draw [thick] (7+l) to (7+w) to (7+r);
\draw [thick, dashed] (69,19) to (71,19);
\draw [thick] (7wu) to (7w) to (7wl);
\draw [thick, dashed] (73,6) to (73,4);

\node (3+l) at (31,7) [shape=circle, draw] {};
\node (3+u) at (33,9) [shape=circle, draw] {};
\draw [thick] (3+) to (3+l);
\draw [thick] (3+) to (3+u);
\draw [thick, dashed] (31.6,7.6) to (32.5,8.5);

\node (4+l) at (46,7) [shape=circle, draw] {};
\node (4+u) at (48,9) [shape=circle, draw] {};
\draw [thick] (4+) to (4+l);
\draw [thick] (4+) to (4+u);
\draw [thick, dashed] (46.6,7.6) to (47.5,8.5);

\node (7+l) at (71,7) [shape=circle, draw] {};
\node (7+u) at (73,9) [shape=circle, draw] {};
\draw [thick] (7+) to (7+l);
\draw [thick] (7+) to (7+u);
\draw [thick, dashed] (71.6,7.6) to (72.5,8.5);

\draw [thick, dashed] (55,0) to (60,0);
\draw [thick] (60,0) to (7'w) to (75,0);
\draw [thick,dashed] (75,0) to (80,0);

\draw [thick] (3) to (3w) to (3+);
\draw [thick] (3'w) to (3');

\draw [thick] (4) to (4w) to (4+);
\draw [thick] (4'w) to (4');

\draw [thick] (7) to (7w) to (7+);
\draw [thick] (7'w) to (7');

\draw [thick] (29.6,16) arc (97:443:3);
\draw [thick] (44.6,16) arc (97:443:3);
\draw [thick] (69.6,16) arc (97:443:3);

\node at (29.6,-2) {$-1$};
\node at (28,-0.8) {$0$};
\node at (29.3,2.5) {$1$};
\node at (26,13) {$2$};
\node at (29.5,18) {$3$};
\node at (32,17.5) {$q$};
\node at (33.5,10.2) {$q+1$};
\node at (33.8,1) {$t-1$};
\node at (38,2.3) {$t$};
\node at (43.5,2.5) {$t+1$};
\node at (40,13) {$t+2$};
\node at (48,17.5) {$t+q$};
\node at (49.5,10) {$t+q+1$};
\node at (48.5,1) {$2t-1$};
\node at (53.5,2.3) {$2t$};
\node at (63.5,2.3) {$nt$};
\node at (68,2.5) {$nt+1$};
\node at (64.5,13) {$nt+2$};
\node at (68,2.5) {$nt+1$};
\node at (73.5,17.5) {$nt+q$};
\node at (75,10) {$nt+q+1$};
\node at (74.5,1) {$(n+1)t-1$};

\node at (57.5,15) {$F$};
\node at (30.5,22) {${\mathcal F}_0$};
\node at (37.5,6) {${\mathcal W}_1$};
\node at (45.5,22) {${\mathcal F}_1$};
\node at (52.5,6) {${\mathcal W}_2$};
\node at (62.5,6) {${\mathcal W}_n$};
\node at (70.5,22) {${\mathcal F}_n\;(n\ge 1)$};

\end{tikzpicture}
\end{center}
\caption{The bipartite map ${\mathcal N}_{p,q}$, with $t:=p+2q-2$} 
\label{mapNpq}
\end{figure}
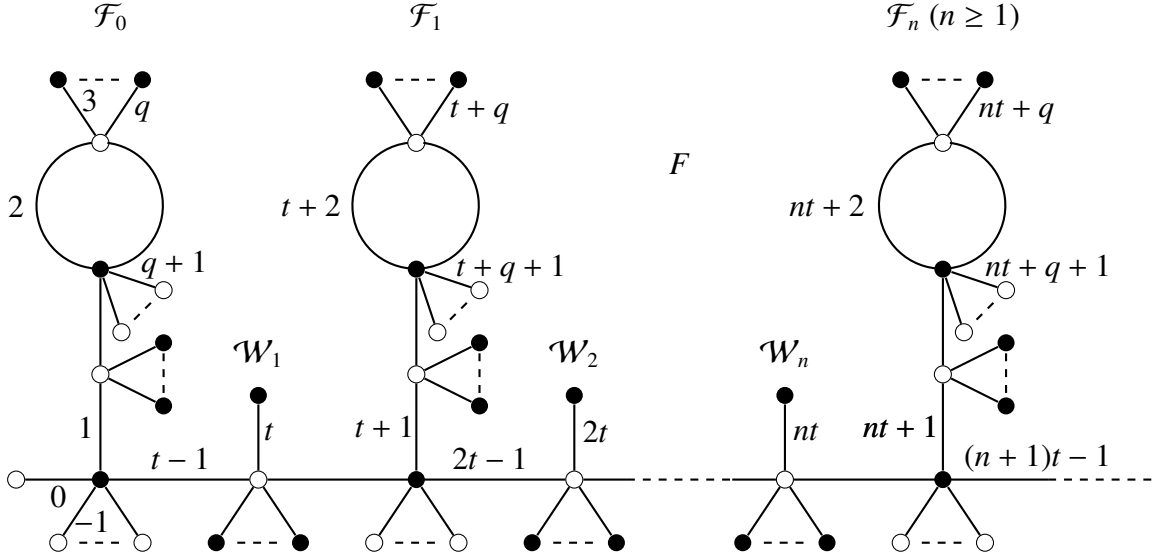

Note that while the flowers ${\mathcal F}_n$ have grown since Case~2 was proved, small `weeds' ${\mathcal W}_n\;(n\ge 1)$ have grown between them. This bipartite map is the Walsh map for an oriented hypermap of type $(p,q,\infty)$. Its monodromy group $G$ is generated by permutations $x$ and $y$, of order $p$ and $q$, which rotate edges around their incident black and white vertices. It is sufficient to show that $G$ acts primitively on the set $\Omega$ of edges, and that in the induced action of $\Gamma$ on $\Omega$, the subgroup $N$ fixing an edge has an epimorphism onto $F_{\infty}$.

The proof is similar to that for Case~2. The elements $\alpha z^i$ of the cycle $C$ of $z$ containing the edge $\alpha=0$ are labelled with  integers $i$. (To save space in Figure~\ref{mapNpq}, only a few significant labels are shown.) Any $\Gamma$-invariant equivalence relation $\sim$ restricts to $C$ as congruence mod~$(n)$ for some $n\in{\mathbb N}\cup\{\infty\}$. If $E=[\alpha]$ then since $\alpha y=\alpha$ we have $Ey=E$. If $n\ne\infty$ then $Ex=E$ since the edge $\beta\in E$ labelled $nt$ is fixed by $x$, so that $E\Gamma=E$ and hence $E=\Omega$. Thus we may assume that $n=\infty$, so all elements of $C$ are in distinct conjugacy classes and hence $E\cap C=\{\alpha\}$. Similarly $E\cap Cy=\{\alpha\}$. But $\Omega=C\cup Cy$, so $E=\{\alpha\}$ and $\sim$ is the identity relation. Thus $G$ is primitive, so the subgroup $N$ of $\Gamma$ fixing $\alpha$ is maximal. It is a free product of cyclic groups, of orders $p$, $q$ and $\infty$, corresponding to the fixed points of $x$, $y$ and $z$. There are infinitely many of each, and mapping those of finite order to the identity gives an epimorphism $N\to F_{\infty}$.

\medskip

\noindent{\sl Case 4: $p$ or $q=\infty$.} If $p=\infty$ or $q=\infty$ we can use the natural epimorphism from $\Gamma=\Delta(p,q,\infty)$ to a hyperbolic triangle group $\Gamma'=\Delta(p',q',\infty)$ with $p'$ and $q'$ both finite, use Case~1, 2 or 3 to establish the CARP for $\Gamma'$, and finally use Lemma~\ref{epi} to deduce it for $\Gamma$.
\end{proof}

Since the FARP has been dealt with, this completes the proof of Theorem~\ref{realisation}(b) for hyperbolic triangle groups $\Delta(p,q,r)$ with an infinite period.

\iffalse

Since we have already established the FARP for all hyperbolic triangle groups, we immediately deduce the following, which completes the proof of Theorem~\ref{realisation}{\color{red}(b)} for such groups and their associated categories of oriented hypermaps:

\begin{coro}\label{SARPcorol}
If $\Gamma$ is a hyperbolic triangle group $\Delta(p,q,r)$ with an infinite period, then $\Gamma$ and the corresponding category of oriented hypermaps have the strong automorphism realisation property.
\end{coro}

\fi

\begin{rema}
Constructions similar to those in the proof of Theorem~\ref{CpCq} have been used in~\cite{Jon18b} to prove that if $\Gamma$ is a hyperbolic triangle group with an infinite period then $\Gamma$ has uncountably many conjugacy classes of maximal subgroups of infinite index. This strengthens and generalises results of B.~H.~Neumann~\cite{Neu33}, Magnus~\cite{Mag73, Mag74}, Tretkoff~\cite{Tre}, and Brenner and Lyndon~\cite{BL} on maximal nonparabolic subgroups of the modular group, and has some overlap with work of Kulkarni~\cite{Kul} on maximal subgroups of free products of cyclic groups. We will use this in \S\ref{3c} to prove Theorem~\ref{realisation}(c).
\end{rema}

%%%%%%%%%%%%%%%%%%%%%%

\section{The CARP for extended triangle groups and unoriented hypermaps}\label{extended}

We now consider the CARP for extended triangle groups $\Gamma=\Delta[p,q,\infty]$ and their associated categories of unoriented hypermaps. In the preceding section we realised countable groups $A$ as automorphism groups in various categories $\mathfrak C^+$ of oriented hypermaps of a given type by constructing specific objects ${\mathcal N}={\mathcal N}_p\;(p\ge 3)$ or ${\mathcal N}_{p,q}\;(p, q\ge 3)$ in those categories, and then forming regular coverings $\mathcal M$ of $\mathcal N$, with covering group $A$, constructed so that $\mathcal M$ has only those automorphisms induced by $A$. These objects $\mathcal M$ and $\mathcal N$ correspond to subgroups $M$ and $N$ of the parent group $\Gamma^+=\Delta(p,q,\infty)$ for $\mathfrak C^+$ with $N=N_{\Gamma^+}(M)$. We can also regard $\mathcal M$ and $\mathcal N$ as objects in the corresponding category $\mathfrak C$ of unoriented maps or hypermaps of that type, for which the parent group is the extended triangle group $\Gamma$. 

\begin{lemm}\label{extlemma}
For these objects $\mathcal M$ we have ${\rm Aut}_{\mathfrak C}({\mathcal M})={\rm Aut}_{\mathfrak C^+}({\mathcal M})\cong A$.
\end{lemm}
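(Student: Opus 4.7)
The plan is to invoke Theorem~\ref{isothm} in the unoriented category $\mathfrak C$, which yields
\[{\rm Aut}_{\mathfrak C}({\mathcal M})\;\cong\;N_{\Gamma}(M)/M.\]
The construction in the proof of Theorem~\ref{CpCq} already gives $N_{\Gamma^+}(M)=N$ with $N/M\cong A$, so ${\rm Aut}_{\mathfrak C^+}({\mathcal M})\cong A$. Since $\Gamma^+$ has index $2$ in $\Gamma$, we have $N_{\Gamma}(M)\cap\Gamma^+=N_{\Gamma^+}(M)=N$, so either $N_{\Gamma}(M)=N$ (the desired conclusion) or $|N_{\Gamma}(M):N|=2$ with the non-trivial coset lying in $\Gamma\setminus\Gamma^+$. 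The task is to rule out this latter possibility.

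The key reduction is from $M$ to its normaliser $N$ in $\Gamma^+$. Since $\Gamma^+$ is normal in $\Gamma$, conjugation by any $g\in\Gamma$ preserves $\Gamma^+$, and so
\[gNg^{-1}\;=\;gN_{\Gamma^+}(M)g^{-1}\;=\;N_{\Gamma^+}(gMg^{-1}).\]
In particular, if $g\in N_{\Gamma}(M)$ then $gNg^{-1}=N_{\Gamma^+}(M)=N$, so $g\in N_{\Gamma}(N)$. Hence $N_{\Gamma}(M)\le N_{\Gamma}(N)$, and it is enough to prove $N_{\Gamma}(N)=N$: for then $N_{\Gamma}(M)\le N\le\Gamma^+$, forcing $N_{\Gamma}(M)=N_{\Gamma^+}(M)=N$ and so ${\rm Aut}_{\mathfrak C}({\mathcal M})\cong N/M\cong A$; together with the obvious inclusion ${\rm Aut}_{\mathfrak C^+}({\mathcal M})\le{\rm Aut}_{\mathfrak C}({\mathcal M})$ this yields equality.

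The main obstacle is therefore to verify that each of the concrete objects ${\mathcal N}={\mathcal N}_3,\,{\mathcal N}_p,\,{\mathcal N}_{p,q}$ built in the preceding section has trivial automorphism group in $\mathfrak C$, i.e.\ admits no non-trivial symmetry, orientation-preserving or not. The orientation-preserving half, $N_{\Gamma^+}(N)=N$, is immediate: $N$ is maximal in $\Gamma^+$, so $N_{\Gamma^+}(N)$ is either $N$ or $\Gamma^+$, and the latter would make $N$ normal in $\Gamma^+$ and hence force $\mathcal N$ to be regular, whereas $\mathcal N$ manifestly has one unbounded face $F$ alongside infinitely many $1$-valent faces. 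For the orientation-reversing half one uses the intentionally asymmetric design of these maps. Any automorphism must fix the unique unbounded face $F$ and so preserve its boundary walk, the cycle $C$ of $z$; but this walk is canonically one-ended, because the distinguished starter configuration (${\mathcal F}_0$ in ${\mathcal N}_p$ and ${\mathcal N}_{p,q}$, or the exceptional ${\mathcal F}_1,{\mathcal F}_2$ of ${\mathcal N}_3$) appears only at one end while the flowers ${\mathcal F}_n$ recur identically for large $n$. Hence the direction of traversal of $C$ is canonically distinguished, and the further asymmetric decorations (the leaves attached to the flowers in ${\mathcal N}_p$, the weeds ${\mathcal W}_n$ in ${\mathcal N}_{p,q}$) exclude any reflective symmetry. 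Thus $N_{\Gamma}(N)=N$, and the lemma follows.
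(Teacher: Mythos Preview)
Your argument is correct and follows essentially the same path as the paper's proof. Both apply Theorem~\ref{isothm}, observe that $N_{\Gamma^+}(M)=N_{\Gamma}(M)\cap\Gamma^+$ has index $1$ or $2$ in $N_{\Gamma}(M)$, and reduce the question to showing that no element of $\Gamma\setminus\Gamma^+$ normalises $N$, i.e.\ that $\mathcal N$ is chiral. The only difference is packaging: the paper reaches ``$N$ is normalised by an element of $\Gamma\setminus\Gamma^+$'' directly from the index-$2$ containment $N\lhd N_{\Gamma}(M)$, whereas you prove the slightly stronger inclusion $N_{\Gamma}(M)\le N_{\Gamma}(N)$ via conjugation; and the paper disposes of chirality in one line (``by inspection of Figures~\ref{mapN3}, \ref{mapNp} and~\ref{mapNpq}''), while you sketch the reason in terms of the one-ended boundary walk of the unbounded face and the deliberate asymmetries (${\mathcal F}_2$ in ${\mathcal N}_3$, the leaves in ${\mathcal N}_p$, the weeds in ${\mathcal N}_{p,q}$).
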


\begin{proof}
We have ${\rm Aut}_{\mathfrak C}({\mathcal M})\cong N_{\Gamma}(M)/M$ and ${\rm Aut}_{\mathfrak C^+}({\mathcal M})\cong N_{\Gamma^+}(M)/M\cong A$ by Theorem~\ref{isothm}, so it is sufficient to show that $N_{\Gamma}(M)=N_{\Gamma^+}(M)$. Clearly $N_{\Gamma}(M)\ge N_{\Gamma^+}(M)$. If this inclusion is proper then since $N_{\Gamma^+}(M)=N_{\Gamma}(M)\cap\Gamma^+$ with $|\Gamma:\Gamma^+|=2$ we have $|N_{\Gamma}(M):N_{\Gamma^+}(M)|=2$, so the subgroup $N=N_{\Gamma^+}(M)$ is normalised by some elements of $\Gamma\setminus\Gamma^+$. This is impossible, since in all cases the map or hypermap $\mathcal N$ corresponding to $N$ is chiral (without orientation-reversing automorphisms), by the proof of Theorem~\ref{CpCq} and by inspection of Figures~\ref{mapN3}, \ref{mapNp} and~\ref{mapNpq}.
\end{proof}

\begin{coro}
If $(p,q,r)$ is a hyperbolic triple with an infinite period then the extended triangle group $\Delta[p,q,r]$ and the associated category of all hypermaps of type $(p,q,r)$ have the countable automorphism realisation property.
\end{coro}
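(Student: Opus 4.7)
The plan is to combine Theorem~\ref{CpCq} with Lemma~\ref{extlemma}. After relabelling periods if necessary, assume $r = \infty$, and write $\Gamma = \Delta[p,q,\infty]$ with its index-$2$ orientation-preserving subgroup $\Gamma^+ = \Delta(p,q,\infty)$.

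Given any countable group $A$, Theorem~\ref{CpCq} supplies a subgroup $M \le \Gamma^+$ with $N_{\Gamma^+}(M)/M \cong A$, where the normalizer $N := N_{\Gamma^+}(M)$ is the point-stabilizer in one of the primitive representations of $\Gamma^+$ built from the chiral oriented hypermaps $\mathcal N_3$, $\mathcal N_p$ or $\mathcal N_{p,q}$ of Figures~\ref{mapN3}--\ref{mapNpq}. Now regard the same $M$ as a subgroup of $\Gamma$; since $\Gamma$ always acts transitively on any coset space $\Gamma/M$, this defines a connected object $\mathcal M$ in the category $\mathfrak C$ of all hypermaps of type $(p,q,\infty)$. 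By Theorem~\ref{isothm},
\[
\mathrm{Aut}_{\mathfrak C}(\mathcal M) \;\cong\; N_\Gamma(M)/M.
\]
Lemma~\ref{extlemma}, whose proof uses exactly the chirality of $\mathcal N$, forces $N_\Gamma(M) = N_{\Gamma^+}(M)$, and therefore $\mathrm{Aut}_{\mathfrak C}(\mathcal M) \cong A$, establishing the CARP for $\Gamma$ and $\mathfrak C$.

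There is essentially no new obstacle at this point: the genuine work of producing non-normal maximal subgroups of $\Gamma^+$ that surject onto $F_\infty$ was carried out in Theorem~\ref{CpCq}, and the comparison of the two normalizers is precisely the content of Lemma~\ref{extlemma}. The only item that merits a brief mental check is that no subtlety is hidden in reinterpreting the $\Gamma^+$-object $\mathcal M$ as a $\Gamma$-object: transitivity of $\Gamma$ on $\Gamma/M$ is automatic, and the normalizer cannot enlarge because $\mathcal N$ admits no orientation-reversing automorphism. The corollary is thus a clean packaging of Theorem~\ref{CpCq} and Lemma~\ref{extlemma}, and in view of Remark~\ref{CARPremark} it also delivers $2^{\aleph_0}$ mutually non-isomorphic realizations of each nontrivial countable $A$, as will be needed for Theorem~\ref{realisation}(c).
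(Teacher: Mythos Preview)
Your proof is correct and matches the paper's approach exactly: the corollary is intended as an immediate consequence of Theorem~\ref{CpCq} and Lemma~\ref{extlemma}, and you have spelled out precisely this deduction. The one small point worth noting is that in Case~4 of Theorem~\ref{CpCq} (where $p$ or $q=\infty$) the subgroup $N$ is obtained by pulling back along an epimorphism to a smaller triangle group, but the associated object $\mathcal N$ is still the same chiral hypermap, so the chirality argument in Lemma~\ref{extlemma} applies unchanged.
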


\iffalse
\begin{proof}
Each of the infinitely many non-conjugate subgroups $M$ of $\Gamma^+$ with $N_{\Gamma^+}(M)/M\cong A$ is conjugate in $\Gamma$ to at most one other, so they contain infinitely many which are mutually non-conjugate in $\Gamma$, all with  $N_{\Gamma}(M)/M\cong A$.
\end{proof}
\fi

This completes the proof of Theorem~\ref{realisation}(b).

\begin{rema}
It would not have been possible to use Lemma~\ref{extlemma} also in the proof of Theorem~\ref{realisation}(a) in \S\ref{FARP}, since the maximal subgroups $N_n$ of $\Gamma^+=\Delta(p,q,r)$ constructed there are normalised by orientation-reversing elements of $\Gamma=\Delta[p,q,r]$. Instead of the natural representation of ${\rm PSL}_2(n)$, we could have used its representation on the cosets of a maximal subgroup $H\cong A_5$ for $n\equiv\pm 1$ mod~$(5)$, or $H\cong S_4$ for $n\equiv \pm 1$ mod~$(8)$: in both of these cases there are two conjugacy classes of subgroups $H$, transposed by conjugation in ${\rm PGL}_2(n)$ (see~\cite[Ch.~XII]{Dic}) and corresponding to a chiral pair of hypermaps. However, in either case the point stabilisers $H$ have constant order as $n\to\infty$, whereas Proposition~\ref{FARPprop} requires $|N_n:K_n|=|H|$ to be unbounded, so we would need an alternative argument to show that $M$ is not normal in $\Gamma^+$, as in the proof of Proposition~\ref{CARP}.
\end{rema}

%%%%%%%%%%%%%%%%%%%%%%%

\section{The proof of Theorem~\ref{realisation}(c)}
\label{3c}

In order to prove Theorem~\ref{realisation}(c) in the case of the FARP, we require each finite group $A$ to be isomorphic to ${\rm Aut}_{\mathfrak C}(\mathcal O)$ for $\aleph_0$ finite connected objects $\mathcal O$ in the relevant category $\mathfrak C$. These correspond to conjugacy classes of subgroups $M$ of finite index in the parent group $\Gamma$ of $\mathfrak C$. In the case of the triangle groups $\Gamma=\Delta(p,q,r)$ the result follows from Remark~\ref{FARPremark}, and for the extended triangle groups $\Delta[p,q,r]$ it then follows from Lemma~\ref{extlemma}.

For the CARP, with no finiteness restriction on $\mathcal O$ and $|\Gamma:M|$, we need $2^{\aleph_0}$ objects realising each countable group $A$. The proofs of the CARP for the various triangle groups $\Gamma=\Delta(p,q,\infty)$ in Theorem~\ref{CpCq} all used Proposition~\ref{CARP}, and by Remark~\ref{CARPremark} this yields the required number of objects provided $A\ne 1$. 

In fact for any $A$, the objects $\mathcal O$ produced by Proposition~\ref{CARP} are all regular coverings of the object $\mathcal N\cong \mathcal O/A\in\mathfrak C$ corresponding to the maximal subgroup $N$ of $\Gamma$. The proofs of the various cases of Theorem~\ref{CpCq}, where suitable subgroups $N$ of $\Gamma$ are constructed, can easily be adapted (as in~\cite{Jon18b}) to produce $2^{\aleph_0}$ conjugacy classes of subgroups $N$ satisfying the hypotheses of Proposition~\ref{CARP}; we thus obtain that many non-isomorphic objects $\mathcal N$, each with $2^{\aleph_0}$ coverings $\mathcal O$ realising a group $A\ne 1$, and one covering (namely $\mathcal O=\mathcal N$) realising $A=1$.

\iffalse
This adaptation is already implicit in the proof of Proposition~\ref{237prop}, where there are  $2^{\aleph_0}$ ways of pairing the infinitely many copies of $G$ to add bridges, and hence that is the number of possible maps $\mathcal N$ which can be used. In the case of Theorem~\ref{CpCq} there is a little more work to do.
\fi

We will give the required details in Case~1 of Theorem~\ref{CpCq}, where $p=3$, $q=2$ and $\Gamma$ is the modular group; the argument is similar in the other cases. We can modify the map $\mathcal N_3$ in Figure~\ref{mapN3} by adding `stalks' between the flowers $\mathcal F_n$, each consisting of a new vertex on the horizontal axis, and a new free edge pointing upwards. Adding a stalk between $\mathcal F_m$ and $\mathcal F_{m+1}$ adds $2$ to the value of all labels on flowers $\mathcal F_n$ for $n>m$. We need to preserve the property that only one of the two labels on the stem of each flower $\mathcal F_n\;(n>1)$ is divisible by $n$. This can be done, in $2^{\aleph_0}$ different ways, by ensuring that for each $n>1$ the total number of stalks added between $\mathcal F_1$ and $\mathcal F_n$ is a multiple of $n$. The proof for Case~1 then proceeds as before, except that it now yields $2^{\aleph_0}$ conjugacy classes of maximal subgroups $N$.

In the remaining cases of Theorem~\ref{CpCq} we could use similar modifications to the maps $\mathcal N_p$ and $
\mathcal N_{p,q}$ in Figures~\ref{mapNp} and \ref{mapNpq}, or alternatively add extra vertices and edges to those below the horizontal axis, so that the non-negative labels above the axis are unaltered. The results and proofs in \S\ref{extended} are not affected by these changes, so Theorem~\ref{realisation}(c) is also proved for the extended triangle groups $\Delta[p,q,r]$ with an infinite period. This completes the proof of Theorem~\ref{realisation}.

%%%%%%%%%%%%%%%%%%%%%%

\section{The CARP for some cocompact triangle groups}\label{cocompact}

In Theorem~\ref{realisation}(b) we proved the CARP only for those hyperbolic triangle groups $\Delta(p,q,r)$ and $\Delta[p,q,r]$ for which at least one period $p, q$ or $r$ is $\infty$. We would like to extend to this property to the cocompact case, where $p, q$ and $r$ are all finite. The arguments we used to prove the CARP depend on a standard generator of $\Delta(p,q,r)$ ($Z$, without loss of generality) having infinite order, so that it can have a cycle $C$ of infinite length in some permutation representation, which is then proved to be primitive by identifying $C$ with $\mathbb Z$. Clearly this is impossible in the cocompact case, so a different approach is needed. The following is a first step in this direction.

\begin{prop}\label{237prop}
If one of $p, q$ and $r$ is even, another is divisible by $3$, and the third is at least $7$, then the triangle groups $\Delta(p,q,r)$ and $\Delta[p,q,r]$ and their associated categories have the countable automorphism realisation property. 
\end{prop}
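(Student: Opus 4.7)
The approach is to reduce, via Lemma~\ref{epi}, to the case $\Gamma=\Delta(2,3,k)$ for some $k\ge 7$: after relabelling we may assume $p$ is even, $q$ is divisible by $3$ and $r=k\ge 7$, and then sending the standard generators $X,Y,Z$ of $\Delta(p,q,r)$ to elements of orders $2,3,k$ yields an epimorphism $\Delta(p,q,r)\twoheadrightarrow\Delta(2,3,k)$, and similarly $\Delta[p,q,r]\twoheadrightarrow\Delta[2,3,k]$. The case $\Delta[2,3,k]$ will then follow from the case $\Delta(2,3,k)$ via Lemma~\ref{extlemma}, provided the permutation representation we construct is chiral, which can be arranged directly during the construction.

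To establish the CARP for $\Gamma=\Delta(2,3,k)$, I would apply Proposition~\ref{CARP}: the task is to exhibit a non-normal maximal subgroup $N\le\Gamma$ together with an epimorphism $N\to F_\infty$. Because $\Gamma$ is cocompact, every $\langle Z\rangle$-orbit has length dividing $k$ and hence is finite, so the device of identifying an infinite cycle with $\mathbb{Z}$ used in the proofs of Theorem~\ref{CpCq} and Lemma~\ref{primitive} is unavailable, and a genuinely new primitivity argument is needed. For this I would invoke Conder's technique~\cite{Con80,Con81} of building transitive coset diagrams by splicing: take a suitable finite "basic" coset diagram $B$ for $\Gamma$ equipped with at least one pair of free $X$-edges (or $Y$-edges) available for the standard splicing operation, together with a finite family of slightly modified auxiliary blocks $B_n$ carrying distinguishing combinatorial markers, and form a one-way infinite chain $B_1,B_2,\ldots$ by iteratively splicing along free edges. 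This produces a countably infinite transitive permutation representation of $\Gamma$.

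Once the chain $\mathcal N$ is constructed, the point stabiliser $N$ is a Fuchsian subgroup of $\Gamma$ of infinite index, so by Singerman's subgroup theorem~\cite{Sin} its signature contains infinitely many elliptic periods equal to $2, 3$ and $k$ (one contribution from each block, corresponding to the fixed points of $X,Y,Z$) together with infinitely many hyperbolic generators. Killing all the torsion generators then yields the required epimorphism $N\twoheadrightarrow F_\infty$, and non-normality of $N$ in $\Gamma$ is automatic since the action on $\Omega$ is not regular. Together with the reduction above and Lemma~\ref{extlemma}, this will give the proposition.

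The main obstacle is verifying primitivity of the infinite chain action, and this is what forces the combinatorial markers on the auxiliary blocks $B_n$. I would seek to arrange the markers so that the $n$-th splicing junction is combinatorially distinguishable from all the others (for instance, by separating consecutive basic blocks by auxiliary segments of strictly increasing length, or by varying the splicing pattern with $n$), so that any nontrivial $\Gamma$-invariant equivalence relation on $\Omega$, restricted to the chain and tracked using "distance along the chain" as a replacement for the integer labelling on the cycle $C$ used in Lemma~\ref{primitive}, is forced to be the universal relation. Achieving this while retaining enough freedom to exhibit $2^{\aleph_0}$ mutually non-conjugate choices of $N$ (needed for the Theorem~\ref{realisation}(c) style counting) and while preserving chirality (needed for the reduction to the extended case) is where the genuine work of the proof has to be done.
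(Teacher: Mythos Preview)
Your reduction to $\Delta(2,3,k)$ via Lemma~\ref{epi} matches the paper, and you correctly identify that Conder's coset-diagram technique is the right tool. But two essential ideas are missing.

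\textbf{The epimorphism $N\to F_\infty$.} A one-way infinite chain of planar Conder blocks, spliced along free edges, produces a planar map. The point stabiliser $N$ in $\Delta(2,3,k)$ is then a free product of \emph{finite} cyclic groups (of orders dividing $2,3,k$, coming from the fixed points of $X,Y,Z$), and such a group admits no nontrivial homomorphism to any free group: every torsion generator must map to the identity. Your claimed ``infinitely many hyperbolic generators'' do not appear automatically. The paper's fix is the whole point of the construction: it uses Conder's diagram $G$, which has \emph{three} $(1)$-handles, so that after forming the chain $H(1)G(1)G(1)\cdots$ each copy of $G$ still carries one unused $(1)$-handle; these are then joined \emph{in pairs}, each pair adding a bridge and hence a handle to the surface. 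The resulting map has infinite genus, and it is precisely these bridges that contribute two free generators each to $N$, yielding the surjection onto $F_\infty$.

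\textbf{Primitivity.} Your ``distance along the chain with combinatorial markers'' idea is too vague to carry weight here, and it is not what the paper does. The paper instead uses Conder's \emph{useful cycle} argument: the element $w=yxt$ has, in the diagram $H$, a cycle of prime length $17$, and the $(1)$-joins are arranged so that this $17$-cycle survives unchanged while every other cycle of $w$ has length coprime to $17$. A suitable power of $w$ then acts as a single $17$-cycle and fixes everything else, so any nontrivial block system must place all $17$ points in one block $E$. Because this cycle is ``useful'' (it contains a fixed point of $y$ and two points of a $3$-cycle of $x$), the block $E$ is invariant under both $X$ and $Y$, hence $E=\Omega$.

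A smaller point: you plan to reach $\Delta[2,3,k]$ via Lemma~\ref{extlemma}, which requires chirality of $\mathcal N$. But Conder's diagrams are bilaterally symmetric by design (this is what allows $T$ to act), so $\mathcal N$ is \emph{not} chiral. The paper therefore treats the extended group directly: it analyses the action of $T$ by conjugation on the generators of $N^+$, identifies which bridge-generators are centralised by $T$, and maps $N=N^+\rtimes\langle T\rangle$ onto the free group on those.
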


\begin{proof}
By permuting periods and applying Lemma~\ref{epi} we may assume that $p=3$, $q=2$ and $r\ge 7$. First suppose that $r=7$. We will construct an infinite transitive permutation representation of the group
\[\Gamma=\Delta[3,2,7]=\langle X, Y, T\mid X^3=Y^2=T^2=(XY)^7=(XT)^2=(YT)^2=1\rangle\]
(where $T=R_2$) in which the subgroup $\Gamma^+=\Delta(3,2,7)=\langle X, Y\rangle$ acts primitively, and we will then apply Proposition~\ref{CARP} to a point-stabiliser in $\Gamma^+$. This representation is constructed by adapting the Higman--Conder technique of `sewing coset diagrams together', used in~\cite{Con80} to realise finite alternating and symmetric groups as quotients of $\Gamma^+$ and $\Gamma$. We refer the reader to~\cite{Con80} for full technical details of this method. (Note that we have changed Conder's notation, which has $X^2=Y^3=1$, by transposing the symbols $X$ and $Y$; this has no significant effect on the following proof.)

\begin{figure}[h!]
\begin{center}
\begin{tikzpicture}[scale=0.45, inner sep=0.8mm]

\draw [thick] (2,5) arc (0:360:2);
\node (a1) at (2,5) [shape=circle, fill=black] {};
\node (a1') at (-2,5) [shape=circle, fill=black] {};
\node (b1) at (0,3) [shape=circle, fill=black] {};
\node (c1) at (0,4.5) [shape=circle, fill=black] {};

\draw [thick] (b1) to (c1);
\draw [thick] (1,5.5) to (c1) to (-1,5.5);

\draw [thick] (2,0) arc (0:360:2);
\node (a2) at (2,0) [shape=circle, fill=black] {};
\node (a2') at (-2,0) [shape=circle, fill=black] {};
\node (b2) at (0,-2) [shape=circle, fill=black] {};
\node (c2) at (0,-0.5) [shape=circle, fill=black] {};

\draw [thick] (b2) to (c2);
\draw [thick] (1,0.5) to (c2) to (-1,0.5);

\draw [thick] (2,-5) arc (0:360:2);
\node (a3) at (2,-5) [shape=circle, fill=black] {};
\node (a3') at (-2,-5) [shape=circle, fill=black] {};
\node (b3) at (0,-7) [shape=circle, fill=black] {};
\node (c3) at (0,-5.5) [shape=circle, fill=black] {};

\draw [thick] (b3) to (c3);
\draw [thick] (1,-4.5) to (c3) to (-1,-4.5);

\node (d) at (4,0) [shape=circle, fill=black] {};
\node (d') at (-4,0) [shape=circle, fill=black] {};
\draw [rounded corners, thick] (a1) to (4,5) to (d) to (4,-5) to (a3);
\draw [rounded corners, thick] (a1') to (-4,5) to (d') to (-4,-5) to (a3');
\draw [thick] (a2) to (d);
\draw [thick] (a2') to (d');

\node at (6, 0) {$\mathcal G$};

%%%%%%

\draw [thick] (17,5) arc (0:360:2);
\node (a1) at (17,5) [shape=circle, fill=black] {};
\node (a1') at (13,5) [shape=circle, fill=black] {};
\node (b1) at (15,3) [shape=circle, fill=black] {};
\node (c1) at (15,4.5) [shape=circle, fill=black] {};

\draw [thick] (b1) to (c1);
\draw [thick] (16,5.5) to (c1) to (14,5.5);

\draw [thick] (17,0) arc (0:360:2);
\node (a2) at (17,0) [shape=circle, fill=black] {};
\node (a2') at (13,0) [shape=circle, fill=black] {};
\node (c2) at (16,-1.75) [shape=circle, fill=black] {};
\node (c2') at (14,-1.75) [shape=circle, fill=black] {};
\node (d2) at (15,-2) [shape=circle, fill=black] {};

\draw [thick] (c2) to (16,0);
\draw [thick] (c2') to (14,0);

\node (e) at (19,0) [shape=circle, fill=black] {};
\node (e') at (11,0) [shape=circle, fill=black] {};
\node (f) at (17.5,-3.5) [shape=circle, fill=black] {};
\node (f') at (12.5,-3.5) [shape=circle, fill=black] {};
\node (g) at (15,-3.5) [shape=circle, fill=black] {};
\draw [thick] (g) to (d2);
\draw [rounded corners, thick] (a1) to (19,5) to (19,-3.5) to (11,-3.5) to (11,5) to (a1');
\draw [thick] (a2) to (e);
\draw [thick] (a2') to (e');
\draw [thick] (f) to (17.5,-2);
\draw [thick] (f') to (12.5,-2);

\node at (9, 0) {$\mathcal H$};

\end{tikzpicture}

\end{center}
\caption{The maps $\mathcal G$ and $\mathcal H$.}
\label{DessinsGH}
\end{figure}
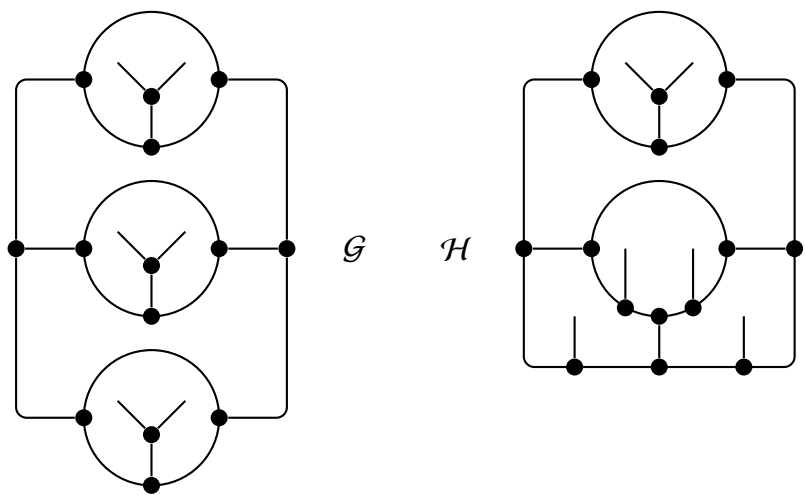

Conder gives $14$ coset diagrams $A,\ldots, N$ for subgroups of index $n=14,\ldots, 108$ in $\Gamma^+$, with respect to the generators $X$ and $Y$; these can be interpreted as describing transitive representations of $\Gamma^+$ of degree $n$. Each diagram is bilaterally symmetric, so this action of $\Gamma^+$ extends to a transitive representation of $\Gamma$ of degree $n$, with $T$ fixing vertices on the vertical axis of symmetry, and transposing pairs of vertices on opposite sides of it.  Although Conder does not do this, each of his diagrams can be converted into a planar map of type $\{7,3\}$ (equivalently a hypermap of type $(3,2,7)$), by contracting the small triangles representing $3$-cycles of $X$ to trivalent vertices, so that the cycles of $X, Y$ and $Z$ on directed edges correspond to the vertices, edges and faces, as before in this paper. (Warning: although $\Gamma^+$ acts as the monodromy group of this oriented map, permuting directed edges as described in Example~2 of \S\ref{permcats}, $\Gamma$ does {\sl not\/} act as the monodromy group of the unoriented map, as in Example~1: the latter permutes flags, whereas $T$ uses the symmetry of the map to extend the action of $\Gamma^+$ on directed edges to an action of $\Gamma$ on them.)

We will construct an infinite coset diagram from Conder's diagrams $G$ and $H$ of degree $n=42$; the corresponding maps $\mathcal G$ and $\mathcal H$ are shown in Figure~\ref{DessinsGH}. Conder defines a $(1)$-handle in a diagram to be a pair $\alpha, \beta$ of fixed points of $Y$ with $\beta=\alpha X=\alpha T$, represented in the corresponding map by two free edges incident with the same vertex on the axis of symmetry. Thus $\mathcal G$ has three $(1)$-handles, while $\mathcal H$ has one. If diagrams $D_i\; (i=1,2)$ of degree $n_i$ have $(1)$-handles $\alpha_i, \beta_i$ then one can form a new diagram, called a $(1)$-join $D_1(1)D_2$, by replacing these four fixed points of $Y$ with transpositions $(\alpha_1, \alpha_2)$ and $(\beta_1,\beta_2)$, and leaving the permutations representing $X, Y$ and $T$ in $D_1$ and $D_2$ otherwise unchanged; the result is a new coset diagram giving a transitive representation of $\Gamma$ of degree $n_1+n_2$. In terms of the corresponding maps $\mathcal D_i$, this is a connected sum operation, in which the two surfaces are joined across cuts between the free ends of the free edges representing the fixed points $\alpha_i$ and $\beta_i$; in particular, if $\mathcal D_i$ has genus $g_i$ then $\mathcal D_1(1)\mathcal D_2$ has genus $g_1+g_2$. (Further details about this and more general joining operations on dessins will appear in~\cite{JZ}.)

Using $(1)$-handles in $G$ and $H$, we first form an infinite diagram
\[H(1)G(1)G(1)G(1)G\cdots.\]
corresponding to an infinite planar map of type $\{7,3\}$. In this chain, each copy of $G$ has an unused $(1)$-handle; we join these arbitrarily in pairs, using $(1)$-compositions. Each such join adds a bridge to the underlying surface, so the result is an oriented trivalent map $\mathcal N$ of type $\{7,3\}$ and of infinite genus. This gives an infinite transitive permutation representation $X\mapsto x$, $Y\mapsto y$, $T\mapsto t$ of $\Gamma$ on the directed edges of $\mathcal N$, with $\Gamma^+$ again acting as its monodromy group, and $T$ acting as a reflection.

We need to prove that $\Gamma$ and $\Gamma^+$ act primitively. According to Conder~\cite{Con80} the permutation $w=yxt$ ($=xyt$ in his notation) induced by $YXT$ has cycle structures $1^313^3$ and $1^13^110^111^117^1$ in $G$ and $H$. In each of the $(1)$-compositions we have used, two fixed points of $w$ merge to form a cycle of length $2$ of $w$, and a cycle of $w$ of length $13$ in $G$ is merged with one of length $13$ or $10$ in $G$ or $H$ to form a cycle of length $26$ or $23$. All other cycles of $w$ are unchanged, so in particular its cycle $C$ of length $17$ in $H$ remains a cycle in the final diagram. Since all other cycles of $w$ have finite length coprime to $17$, some power of $w$ acts on $C$ as $w$ and fixes all other points. Since $17$ is prime, it follows that if $\Gamma^+$ acts imprimitively, then all points in $C$ must lie in the same equivalence class $E$. Now $C$ is what Conder calls a `useful cycle', since it contains a fixed point of $y$ not in a $(1)$-handle (the right-hand free edge $\beta$ in the central circle in $\mathcal H$ in Figure~\ref{DessinsGH}) and a pair of points from a $3$-cycle of $x$ (namely $\beta$ and $\beta x=\beta w^8$). It follows that $X$ and $Y$ leave $E$ invariant, which is impossible since they generate the transitive group $\Gamma^+$. Thus $\Gamma^+$ acts primitively (as therefore does $\Gamma$), so the point-stabilisers $N=\Gamma_{\alpha}$ and $N^+=\Gamma_{\alpha}^+$ of a directed edge $\alpha$ are maximal subgroups of $\Gamma$ and $\Gamma^+$. By the Reidemeister--Schreier algorithm, $N^+$ is a free product of four cyclic groups of order $2$ (arising from fixed points of $y$ in $H$ not in the $(1)$-handle), and infinitely many of infinite order (two arising from each bridge between a pair of copies of $G$). Thus $N^+$ admits an epimorphism onto $F_{\infty}$, so Proposition~\ref{CARP} shows that $\Gamma^+$ has the CARP.

We can choose $\alpha$ to be fixed by $t$, so that $T\in N$, and hence $N$ is a semidirect product of $N^+$ by $\langle T\rangle$. The action of $t$ is to reflect $H$ and all the copies of $G$ in the diagram, together with the bridges added between pairs of them. Acting by conjugation on $N^+$, $T$ therefore induces two transpositions on the elliptic generators of order $2$. Each bridge contributes a pair of free generators to $N^+$, one of them (representing a loop crossing the bridge and returning `at ground level'), centralised by $T$, the other (representing a loop transverse to the first, following a cross-section of the bridge) inverted by $T$; by sending $T$, together with the inverted generators and the four elliptic generators of $N^+$, to the identity, we can map $N$ onto the free group of infinite rank generated by the centralised generators, so Proposition~\ref{CARP} shows that $\Gamma$ has the CARP.

The extension to the case $r\ge 7$ is essentially the same, but based on the coset diagrams in Conder's later paper~\cite{Con81} on alternating and symmetric quotients of $\Delta(3,2,r)$ and $\Delta[3,2,r]$. In this case his coset diagrams $S(h,d)$ and $U(h,d)$ play the roles of $G$ and $H$, where $r=h+6d$ with $d\in{\mathbb N}$ and $h=7,\ldots, 12$.
\end{proof}

Note that this argument also shows that the groups and categories in Proposition~\ref{237prop} all satisfy Theorem~\ref{realisation}(c): there are $2^{\aleph_0}$ ways of pairing the copies of $G$ to produce bridges, giving mutually inequivalent representations and hence mutually non-conjugate subgroups $N$ and $N^+$ of $\Gamma$ and $\Gamma^+$.

\iffalse
It is hoped that full details of this proof will appear in a later paper.
\fi

Proposition~\ref{237prop} accounts for a proportion $121/216$ of all hyperbolic triples. It seems plausible that coset diagrams of Everitt~\cite{Eve} and others, constructed to extend Conder's results on alternating group quotients to all finitely generated non-elementary Fuchsian groups, could be used to prove  that all cocompact hyperbolic triangle groups $\Delta(p,q,r)$ and $\Delta[p,q,r]$, together with their associated categories, satisfy parts (b) and (c) of Theorem~\ref{realisation}, thus proving Conjecture~\ref{SARPconj}.

\section{Embedding all finite groups}\label{embedding}

Finally we return to the permutation group $P$ defined in the proof of Theorem~\ref{freeSARP} and discussed in Remark~\ref{gpP}, generated by the translation $x:i\mapsto i+1$ and the transposition $y=(0,1)$ of $\mathbb Z$. 
This is the monodromy group of the map $\mathcal M_{\infty}$ and of the corresponding hypermap $\mathcal H_{\infty}$ shown in Figure~\ref{Minfty}, where there are infinitely many edges. It follows that $P$ is isomorphic to the automorphism group ${\rm Aut}(\widetilde{\mathcal M}_{\infty})={\rm Aut}(\widetilde{\mathcal H}_{\infty})$ of the minimal regular covers $\widetilde{\mathcal M}_{\infty}$ and $\widetilde{\mathcal H}_{\infty}$ of these objects in their categories $\mathfrak M^+$ and $\mathfrak H^+$. Now for each $n\ge 1$ there is a subgroup of $P$, generated by the elements $y_k=y^{x^k}=(k,k+1)$ for $k=1,\ldots, n-1$, which is isomorphic to $S_n$. It therefore follows from Cayley's Theorem that, despite their very simple definition, the objects $\widetilde{\mathcal M}_{\infty}$ and $\widetilde{\mathcal H}_{\infty}$ are rich enough that their common automorphism group contains a copy of every finite group:

\begin{prop}\label{universal}
Every finite group is isomorphic to a subgroup of ${\rm Aut}_{\mathfrak M^+}(\widetilde{\mathcal M}_{\infty})={\rm Aut}_{\mathfrak H^+}(\widetilde{\mathcal H}_{\infty})$. 
\end{prop}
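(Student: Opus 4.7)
The plan is essentially a one-line deduction from the preceding paragraph, so the work is in verifying the two ingredients. First, I would invoke the identification $\mathrm{Aut}_{\mathfrak M^+}(\widetilde{\mathcal M}_{\infty}) \cong \mathrm{Aut}_{\mathfrak H^+}(\widetilde{\mathcal H}_{\infty}) \cong P$, which has been noted in the paragraph immediately preceding the statement: the automorphism group of a minimal regular cover of an object is (antiisomorphic and hence isomorphic to) its monodromy group. Consequently, the proposition reduces to showing that every finite group embeds in the permutation group $P = \langle x, y\rangle \le \mathrm{Sym}(\mathbb Z)$.

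Next, I would exhibit a copy of $S_n$ inside $P$ for each $n \ge 1$. The key observation is that for each $k \in \mathbb Z$ the conjugate $y_k := y^{x^k}$ is the transposition $(k, k+1)$, and hence lies in $P$. For any fixed $n \ge 2$, the elements $y_0, y_1, \ldots, y_{n-2}$ are precisely the standard Coxeter generators of the symmetric group on $\{0, 1, \ldots, n-1\}$: they satisfy $y_k^2 = 1$, $(y_k y_{k+1})^3 = 1$, and $(y_k y_l)^2 = 1$ for $|k-l| \ge 2$ (these are immediate from disjointness or the cycle structure of $(k,k+1)(k+1,k+2)$). Since they act trivially outside $\{0, 1, \ldots, n-1\}$, they generate a subgroup of $P$ isomorphic to $S_n$.

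Finally, given an arbitrary finite group $G$, Cayley's theorem embeds $G$ into $S_{|G|}$, and composing with the inclusion $S_{|G|} \hookrightarrow P$ above yields an embedding $G \hookrightarrow P \cong \mathrm{Aut}_{\mathfrak M^+}(\widetilde{\mathcal M}_{\infty})$, proving the proposition. There is no genuine obstacle here; the only thing that required any work was the construction of $P$ and the identification of the automorphism groups of the regular covers with it, both of which are already in hand from Remark~\ref{gpP} and the discussion preceding the statement.
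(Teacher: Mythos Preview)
Your argument is correct and matches the paper's own proof essentially line for line: identify the automorphism group of the minimal regular cover with the monodromy group $P$, exhibit $S_n$ inside $P$ via the conjugates $y_k=y^{x^k}=(k,k+1)$, and then invoke Cayley's theorem. The only cosmetic difference is that the paper indexes the generators as $y_1,\ldots,y_{n-1}$ acting on $\{1,\ldots,n\}$ rather than $y_0,\ldots,y_{n-2}$ on $\{0,\ldots,n-1\}$, and does not spell out the Coxeter relations as you do.
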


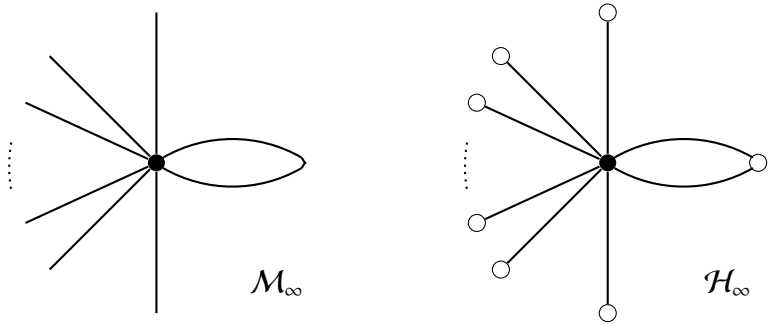
\begin{figure}[h!]
\begin{center}
\begin{tikzpicture}[scale=0.2, inner sep=0.8mm]

\node (A) at (0,0) [shape=circle, fill=black] {};
\draw [thick] (9.65,0.35) arc (60:120:9.2);
\draw [thick] (9.65,-0.35) arc (-60:-120:9.2);
\draw [thick] (9.65,0.35) to (9.75,0.2) to (9.8,0.1) to (9.9,0) to (9.8,-0.1) to (9.75,-0.2) to (9.65,-0.35);
\draw [thick] (0,10) to (A) to (0,-10);
\draw [thick] (-7.1,7.1) to (A) to (-7.1,-7.1);
\draw [thick] (-8.7,4) to (A) to (-8.7,-4);
\draw [thick, dotted] (-9.65,1.35) arc (170:190:9.2);

\node at (8,-8) {${\mathcal M}_{\infty}$};

%%%%%%%

\node (A') at (30,0) [shape=circle, fill=black] {};
\node (0') at (40,0) [shape=circle, draw] {};
\node (1') at (30,10) [shape=circle, draw] {};
\node (-1') at (30,-10) [shape=circle, draw] {};
\draw [thick] (39.65,0.35) arc (60:120:9.2);
\draw [thick] (39.65,-0.35) arc (-60:-120:9.2);
\node (2') at (22.9,7.1) [shape=circle, draw] {};
\node (-2') at (22.9,-7.1) [shape=circle, draw] {};
\node (3') at (21.3,4) [shape=circle, draw] {};
\node (-3') at (21.3,-4) [shape=circle, draw] {};
\draw [thick] (1') to (A') to (-1');
\draw [thick] (2') to (A') to (-2');
\draw [thick] (3') to (A') to (-3');
\draw [thick, dotted] (20.65,1.35) arc (170:190:9.2);

\node at (38,-8) {${\mathcal H}_{\infty}$};

\end{tikzpicture}
\end{center}
\caption{The map ${\mathcal M}_{\infty}$ and the hypermap ${\mathcal H}_{\infty}$} 
\label{Minfty}
\end{figure}

A slight modification of this construction, redefining $y$, yields a similar result for certain other categories of hypermaps. First we note that the group $F={\rm FSym}({\mathbb Z})$ of finitary permutations of $\mathbb Z$ has a subgroup of index $2$, the group  $F^+={\rm FSym}^+({\mathbb Z})$ of even finitary permutations of $\mathbb Z$. Similarly, $P=F\rtimes\langle x\rangle$ has an `even' subgroup $P^+=F^+\rtimes\langle x\rangle$ of index $2$. We now need the following lemma:

\begin{lemm}\label{xandy}
Let $q\ge 2$. The permutations $x$ and $y$ of $\mathbb Z$ defined by $x:i\mapsto i+1$ and $y=(1,2,\ldots, q)$ generate the group $P$ if $q$ is even, and $P^+$ if $q$ is odd.
\end{lemm}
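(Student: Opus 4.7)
The plan is to analyze the normal closure of $y$ in $H := \langle x, y\rangle$. Set $y_k := y^{x^k} = x^{-k} y x^k$, which is the $q$-cycle $(k+1, k+2, \ldots, k+q)$; conjugation by $x$ simply shifts the index, so $K := \langle y_k : k \in \mathbb{Z}\rangle$ is normal in $H$, and $H = K \cdot \langle x\rangle$. Because $P = F \rtimes \langle x\rangle$ and $P^+ = F^+ \rtimes \langle x\rangle$, the lemma reduces to proving that $K = F$ when $q$ is even and $K = F^+$ when $q$ is odd. One direction is immediate: each $y_k$ is finitary, so $K \le F$; and since a $q$-cycle is even precisely when $q$ is odd, we also have $K \le F^+$ in the odd case.

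For the reverse inclusion I would first show that $\langle y_0, y_1\rangle$ acts on $\{1, 2, \ldots, q+1\}$ as $A_{q+1}$ or $S_{q+1}$ (according to the parity of $q$). The supports of $y_0$ and $y_1$ together cover $\{1, \ldots, q+1\}$, giving transitivity, and the stabiliser of $1$ contains $y_1$, which is transitive on $\{2, \ldots, q+1\}$, giving $2$-transitivity. A direct computation yields the $3$-cycle
\[y_0\, y_1^{-1} \;=\; (1,\ q+1,\ q),\]
and the classical consequence of Jordan's theorem that a $2$-transitive subgroup of $S_n$ containing a $3$-cycle contains $A_n$ then gives $\langle y_0, y_1\rangle \supseteq A_{q+1}$.

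Next I would induct on $N \ge 2$ to prove that $G_N := \langle y_0, \ldots, y_{N-1}\rangle \supseteq A_{N+q-1}$. Assuming this for some $N$, the generator $y_N$ moves the new point $N+q$ into $\{1, \ldots, N+q-1\}$, so $G_{N+1}$ is transitive on $\{1, \ldots, N+q\}$; its stabiliser of $N+q$ contains the transitive group $A_{N+q-1}$, so $G_{N+1}$ is $2$-transitive, and it still contains a $3$-cycle, so Jordan applies again to give $A_{N+q} \subseteq G_{N+1}$. Taking the union over $N$ and conjugating by negative powers of $x$ covers every finite interval of $\mathbb{Z}$, so $K \supseteq F^+$. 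If $q$ is odd, this completes the proof that $K = F^+$; if $q$ is even, then $y_0 \in K$ is an odd permutation and $[F : F^+] = 2$ forces $K = F$.

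The only point requiring care is the base step of the induction for small $q$, where $A_{q+1}$ must genuinely be produced (here $q = 2$ gives $S_3$ directly, and $q \ge 3$ is covered by the argument above). Otherwise the proof is a routine application of Jordan's theorem followed by bookkeeping, and no serious obstacle is anticipated.
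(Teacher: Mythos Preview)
Your proof is correct and shares the paper's overall architecture: reduce to showing that the normal closure $K$ of $y$ in $\langle x,y\rangle$ equals $F$ or $F^+$, and begin by extracting a $3$-cycle from the interaction of $y_0$ and $y_1$. The paper writes this as $y^x y^{-1}=(1,q,q+1)$, essentially the inverse of your $y_0 y_1^{-1}=(1,q+1,q)$.

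Where you diverge is in how that $3$-cycle is exploited. You appeal to Jordan's theorem (a $2$-transitive group containing a $3$-cycle contains the alternating group) and then induct along growing intervals $\{1,\ldots,N+q-1\}$. The paper instead stays entirely elementary: it conjugates $(1,q,q+1)$ by $(y^x)^2$ to obtain $(1,2,3)$ in one stroke, shifts by powers of $x$ to get every consecutive $3$-cycle $(k,k+1,k+2)$, and quotes the standard fact that these generate $F^+$. This avoids Jordan's theorem and the induction altogether, compressing your two-step argument into a couple of lines of explicit computation. Your route is perfectly sound and would adapt more readily if the short cycle produced were not a $3$-cycle; the paper's is simply the minimal computation for the case at hand.
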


\begin{proof}
The normal closure $N$ of $y$ in $P$ is a subgroup of $F$, and of $F^+$ if $q$ is odd, containing the permutation
\[y^xy^{-1}=(2,3,\ldots,q+1)(q,q-1,\ldots, 1)=(1,q,q+1).\]
Conjugating this by $(y^x)^2=(2,3,\ldots,q+1)^2$ gives $(1,2,3)\in N$, and conjugating this by $x^{k-1}$ gives $(k,k+1,k+2)\in N$ for all $k\in{\mathbb Z}$. Now $\langle(1,2,3),\ldots, (n-2, n-1, n)\rangle=A_n$ for each $n\ge 3$, so the $3$-cycles $(k,k+1,k+2)$ for $k\in\mathbb Z$ generate $F^+$. Thus $N=F$ or $F^+$ as $n$ is even or odd, giving $\langle x, y\rangle=N\rtimes\langle x\rangle=P$ or $P^+$ respectively. 
\end{proof}

Thus for each $q\ge 2$ there is an epimorphism $\Delta(\infty, q, \infty)\to P$ or $P^+$ given by $X\mapsto x, Y\mapsto y$ as $q$ is even or odd. If $\mathcal H_q$ is the corresponding oriented hypermap of type $(\infty, q, \infty)$ with monodromy group $P$ or $P^+$, its minimal regular cover $\widetilde{\mathcal H}_q$ has automorphism group $P$ or $P^+$ respectively. Since $S_n$ can be embedded in $A_{n+2}$ (as the stabiliser of $\{n+1,n+2\}$), we therefore have the following:

\begin{coro}
For each $q\ge 2$ there is an oriented hypermap of type $(\infty, q, \infty)$ such that its automorphism group contains an isomorphic copy of every finite group.
\end{coro}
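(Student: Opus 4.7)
The plan is to realise the claimed hypermap as the minimal regular cover of the object arising from Lemma~\ref{xandy}. Given $q\ge 2$, let $\mathcal H_q\in\mathfrak H^+$ be the oriented hypermap of type $(\infty,q,\infty)$ corresponding to the epimorphism $\Delta(\infty,q,\infty)\to P$ (for $q$ even) or $\Delta(\infty,q,\infty)\to P^+$ (for $q$ odd) supplied by Lemma~\ref{xandy}, acting on $\mathbb Z$ via $X\mapsto x:i\mapsto i+1$ and $Y\mapsto y=(1,2,\ldots,q)$. Its minimal regular cover $\widetilde{\mathcal H}_q$ is the regular object corresponding to the kernel of this epimorphism, so by Theorem~\ref{isothm} (or directly, since for regular objects the automorphism group is isomorphic to the monodromy group) we have ${\rm Aut}_{\mathfrak H^+}(\widetilde{\mathcal H}_q)\cong P$ or $P^+$ respectively. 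This is the candidate hypermap.

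The remaining task is to verify that both $P$ and $P^+$ contain an isomorphic copy of every finite group. For $P$, this is essentially the content of Proposition~\ref{universal}: $P$ contains the full finitary symmetric group $F={\rm FSym}(\mathbb Z)$, hence $S_n$ for every $n$, so Cayley's theorem concludes. For $P^+$, I would use the classical embedding $S_n\hookrightarrow A_{n+2}$ defined by $\sigma\mapsto\sigma$ when $\sigma$ is even and $\sigma\mapsto \sigma\cdot(n+1,n+2)$ when $\sigma$ is odd (this is a homomorphism because $(n+1,n+2)$ is disjoint from, and hence commutes with, every element of $S_n$). Since $F^+={\rm FSym}^+(\mathbb Z)\le P^+$ contains a copy of $A_m$ for every $m$ (acting on $\{1,\ldots,m\}$), this places $S_n$ inside $P^+$ for every $n$, and another invocation of Cayley's theorem embeds every finite group into $P^+$.

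No serious obstacle is anticipated: the result is a routine assembly of Lemma~\ref{xandy}, the correspondence between regular covers and monodromy groups, and Cayley's theorem. The only point requiring a small amount of care is the odd-$q$ case, where Lemma~\ref{xandy} only produces the index-two subgroup $P^+$ rather than all of $P$, so one cannot quote Proposition~\ref{universal} verbatim; the $S_n\hookrightarrow A_{n+2}$ trick is precisely what bridges this gap and is the only real content beyond bookkeeping.
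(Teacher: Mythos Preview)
Your proposal is correct and follows essentially the same route as the paper: take the minimal regular cover $\widetilde{\mathcal H}_q$ of the hypermap coming from Lemma~\ref{xandy}, identify its automorphism group with $P$ or $P^+$, and then embed every finite group via Cayley's theorem together with the $S_n\hookrightarrow A_{n+2}$ trick in the odd case. The paper phrases that last embedding as realising $S_n$ as the stabiliser of $\{n+1,n+2\}$ in $A_{n+2}$, which is exactly the map you wrote down explicitly.
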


%%%%%%%%%%%%%%%%%%
%%%%%%%%%%%%%%%%%%

\end{document}